  \def\gn#1#2{{$\href{http://groupnames.org/\#?#1}{#2}$}}
\def\gn#1#2{$#2$}  
\tikzset{sgplattice/.style={inner sep=1pt,norm/.style={red!50!blue},char/.style={blue!50!black},
  lin/.style={black!50}},cnj/.style={black!50,yshift=-2.5pt,left=-1pt of #1,scale=0.5,fill=white}}
\newcommand{\Ind}{\operatorname{Ind}}
\newcommand{\RO}{\operatorname{RO}}
\newcommand{\SL}{\operatorname{SL}}
\newcommand{\GL}{\operatorname{GL}}
\newcommand{\Aut}{\operatorname{Aut}}
\newcommand{\tr}{\operatorname{tr}}
\newcommand{\PSm}{\operatorname{PSm}}
\newcommand{\Sm}{\operatorname{Sm}}
\newcommand{\Res}{\operatorname{Res}}
\newcommand{\Diff}{\operatorname{Diff}}
\newcommand{\SmallGroup}{\operatorname{SmallGroup}}
\newcommand{\Mod}[1]{\ (\mathrm{mod}\ #1)}
\newcommand{\R}{\operatorname{R}}
\newcommand{\rank}{\operatorname{rank}}
\newcommand{\PO}{\operatorname{PO}}
\newcommand{\prim}{\operatorname{prim}}
\newcommand\blfootnote[1]{%
  \begingroup
  \renewcommand\thefootnote{}\footnote{#1}%
  \addtocounter{footnote}{-1}%
  \endgroup
}
\newtheorem{theorem}{Theorem}[section]
\newtheorem{definition}[theorem]{Definition}
\newtheorem{lemma}[theorem]{Lemma}
\newtheorem{corollary}[theorem]{Corollary}
\newtheorem{proposition}[theorem]{Proposition}
\newtheorem{remark}[theorem]{Remark}
\newtheorem{conjecture}[theorem]{Conjecture}
\newtheorem*{theorem*}{Theorem}
\newtheorem*{question*}{Question}
\newtheorem{theoremx}{Theorem}
\newtheorem{conjecturex}[theoremx]{Conjecture}
\journal{Journal Name}
\def\acts{\curvearrowright}
\def\ps@pprintTitle{%
 \let\@oddhead\@empty
 \let\@evenhead\@empty
 \def\@oddfoot{}%
 \let\@evenfoot\@oddfoot}
\DeclareMathOperator{\Ima}{Im}
\begin{document}
\begin{frontmatter}


\title{\textbf{Inducing of exotic smooth two fixed point actions on spheres}}



\author{Piotr Mizerka}

\begin{abstract}
This paper is concerned with the Smith question \cite{Smith1960} which reads as follows. Is it true that for a finite group acting smoothly on a sphere with exactly two fixed points, the tangent spaces at the fixed points have always isomorphic group module structures defined by differentiation of the action? We show that one can answer this question negatively by using the technique of induction of group representations. We apply our results to indicate new dimensions of spheres admitting actions of specific Oliver groups, which give the negative answer to the Smith question. In particular, for the first time, we indicate some solvable non-nilpotent Oliver groups which yield negative answers to the Smith question.
\end{abstract}




\end{frontmatter}

\linenumbers
\setcounter{section}{0}
\nolinenumbers
\section*{Introduction}
\blfootnote{
$2010$ \emph{Mathematics Subject Classification.} Primary $57S25$; Secondary $55M35$.

\emph{\textcolor{white}{ggf}Keywords and phrases.} induced character, fixed point, Smith problem, smooth action.}
In this article, we are concerned with the Smith question for finite groups \cite[p. 406, the footnote]{Smith1960}. 
\begin{question*}[Smith question]
Is it true that for a finite group acting smoothly on a sphere with exactly two fixed points, the tangent spaces at the fixed points have always isomorphic group module structures defined by differentiation of the action?
\end{question*}
In the case where for the action in the Smith question, the group modules are not isomorphic to each other, we say that the action is \emph{Smith exotic}. In $1968$, Atiyah and Bott \cite{AtiyahBott1968} gave the affirmative answer to this question for cyclic groups of prime order. Petrie and his students and collaborators worked out a program which leads to obtaining negative answers to the Smith question, see \cite{Petrie1978},\cite{Petrie1979},\cite{Petrie1982} and \cite{Petrie1983}. Also, Cappell and Shaneson \cite{CappellShaneson1980}\cite{Cappell1982} obtained similar answers in the case the acting group $G=C_{4k}$, $k\geq 2$, the cyclic group of order divisible by $4$ and at greater or equal $8$. In fact, according to the current state of knowledge, the action of $C_8$ on $S^9$ is an example of a Smith exotic action with the smallest dimension of the sphere. On the other hand, Bredon \cite{Bredon1969} proved that if $G=C_{2^n}$, the cyclic group of order $2^n$, then there exists a treshold dimension $D\geq 0$ with the property that there does not exists a Smith exotic action of $G$ on $S^n$ whenever $n\geq D$. A lot of further research on the Smith problem, including Illman \cite{Illman1982}, Milnor \cite{Milnor1966} was done. Later, results concerning the Smith sets (definition in the notation section) were obtained, see the works of Laitinen, Morimoto, Pawałowski, Solomon and Sumi, (\cite{LaitinenPawalowski1999}, \cite{PawalowskiSolomon2002}, \cite{MorimotoPawalowski2003}, \cite{Sumi2016}). For a comprehensive survey on the Smith problem, we refer the reader to the work of Pawałowski \cite{Pawalowski2018}. To our knowledge, the question of examining the dimensions of \emph{Smith exotic spheres} (that is spheres admitting Smith exotic actions) has not been studied intensively yet. Before we proceed, let us establish some simplified notation. Assume a finite group $G$ acts smoothly on a manifold $M$ with a fixed point $x\in M$. We shall refer to the $\mathbb{R}G$-module structure at the tangent space $T_xM$, defined by differentiation of the action of $G$ on $M$, simply as an \emph{$\mathbb{R}G$-module structure at the fixed point $x$}.

We will need the notion of a \emph{Oliver group}. This is a group $G$ with the property that there does not exist a sequence of subgroups $P\trianglelefteq H\trianglelefteq G$ ($P$ not necessarily normal in $G$) such that $P$ and $G/H$ are of prime power orders and $H/P$ is cyclic. By the work of Oliver \cite{Oliver1996} and Morimoto \cite{Morimoto19982} we know that Oliver groups are precisely the finite groups admitting smooth fixed point free actions on disks. Using this property, one can easily deduce that if $G$ is a finite group with an Oliver subgroup $H$, then $G$ must be Oliver as well. Indeed, in such a case there exists a smooth fixed point free action of $H$ on a disk $D$ without fixed points. Then, by the induced action, $G$ acts on $D^{[G:H]}$, which is diffeomorphic to a disk. Moreover, the fixed point set for the induced $G$-action is preserved and thus empty. Given an Oliver group $G$ and its normal subgroup $N$, we focus deducing Smith exotic actions of $G$ using the properties of the induction homomorphism $\Ind_N^G$. For any subgroup $H$ of a finite group $G$, we provide necessary and sufficient conditions for $\Ind_H^G$ to be a monomorphism - defined both for $\R(H)$, the representation group in the general complex case, and for $\RO(H)$, the representation group in the real case. This condition depends only on the conjugacy classes of $H$ and $G$. For a better presentation of this result, let us define a \emph{real conjugacy class} of an element $g\in G$ to be the set $(g)^{\pm}=(g)\cup(g^{-1})$ and, for any $h\in H$, denote by $(h)_H$ and $(h)_G$ the conjugacy classes of $h$ in $H$ and $G$ respectively. Analogously, $(h)_H^{\pm}=(h)_H\cup(h^{-1})_H$ and $(h)_G^{\pm}=(h)_G\cup(h^{-1})_G$. Then, the condition can be stated as follows.

\begin{theoremx}\label{theorem:A}
Let $G$ be a finite group and $H$ its a subgroup. Then the following holds.
\begin{enumerate}[(1)]
    \item $\Ind_H^G:\R(H)\rightarrow\R(G)$ is a monomorphism if and only if $(h)_G\cap H=(h)_H$ for any $h\in H$.
    \item $\Ind_H^G:\RO(H)\rightarrow\RO(G)$ is a monomorphism if and only if $(h)_G^{\pm}\cap H=(h)_H^{\pm}$ for any $h\in H$.
\end{enumerate}
\end{theoremx}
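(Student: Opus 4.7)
The plan is to work at the level of characters and exploit the Frobenius formula. The character map realizes $\R(H)$ as a sublattice of the $\mathbb{C}$-valued class functions on $H$, and $\RO(H)$ as a sublattice of the subspace of class functions satisfying $\chi(h)=\chi(h^{-1})$; in each case injectivity of $\Ind_H^G$ is equivalent to injectivity of the induction map on the corresponding space of class functions, via the identity
\begin{equation*}
\Ind_H^G(\chi)(g)=|C_G(g)|\sum_{(h)_H\,\subseteq\,(g)_G\cap H}\frac{\chi(h)}{|C_H(h)|}.
\end{equation*}
Thus $\Ind_H^G(\chi)\equiv 0$ on $G$ is equivalent to the vanishing of the right-hand side for every $g\in G$.

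For part (1), the forward direction is immediate: under the hypothesis $(h)_G\cap H=(h)_H$, specializing to $g=h$ reduces the Frobenius sum to the single term $\chi(h)/|C_H(h)|$, so $\Ind_H^G(\chi)=0$ forces $\chi\equiv 0$. For the converse, if two distinct $H$-conjugacy classes $(h_1)_H$ and $(h_2)_H$ lie in the same $G$-conjugacy class, then
\begin{equation*}
\chi=|C_H(h_1)|\,\mathbf{1}_{(h_1)_H}-|C_H(h_2)|\,\mathbf{1}_{(h_2)_H}
\end{equation*}
is a nonzero rational class function on $H$ whose Frobenius sum vanishes on every $G$-class; after clearing denominators this yields a nonzero element of $\ker\Ind_H^G$ on $\R(H)$.

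For part (2), the analogous computation is carried out on the subspace of class functions satisfying $\chi(h)=\chi(h^{-1})$. Assuming $(h)_G^{\pm}\cap H=(h)_H^{\pm}$, the set $(h)_G\cap H$ must equal either $(h)_H$ or $(h)_H\sqcup(h^{-1})_H$, and in either case the relation $\chi(h)=\chi(h^{-1})$ together with $|C_H(h)|=|C_H(h^{-1})|$ reduces the Frobenius sum at $g=h$ to a nonzero integer multiple of $\chi(h)/|C_H(h)|$, which must therefore vanish. The converse parallels (1): two distinct real $H$-classes $(h_1)_H^{\pm}\neq(h_2)_H^{\pm}$ inside a common real $G$-class $(h_1)_G^{\pm}$ allow one to build a nonzero element of $\ker\Ind_H^G$ supported on $(h_1)_H^{\pm}\cup(h_2)_H^{\pm}$ with coefficients chosen to cancel in the Frobenius sum. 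I expect the main obstacle to lie in the forward direction of (2), where one must enumerate whether $(h)_G$ is self-inverse and whether $(h)_H$ is self-inverse, verifying that the invariance $\chi(h)=\chi(h^{-1})$ precisely cancels the potential doubling coming from the Frobenius formula.
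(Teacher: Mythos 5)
Your proof is correct, and the ``if'' directions coincide with the paper's: both specialize the Frobenius formula at $g=h$ under the fusion hypothesis and observe that the sum collapses to a single term (or, in the real case, to $1$ or $2$ copies of the same term, using $\chi(h)=\chi(h^{-1})$ and $|C_H(h)|=|C_H(h^{-1})|$). Where you genuinely diverge is the ``only if'' direction: the paper proves it by a rank-counting argument (its Lemma~1.2 shows $\rank(\Ima(\Ind_H^G))$ is bounded by the number of ($\pm$-)conjugacy classes of $G$ meeting $H$, which drops strictly below $\rank\R(H)$, resp.\ $\rank\RO(H)$, as soon as two $H$-classes fuse in $G$), whereas you exhibit an explicit nonzero class function in the kernel, e.g.\ $|C_H(h_1)|\mathbf{1}_{(h_1)_H}-|C_H(h_2)|\mathbf{1}_{(h_2)_H}$. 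Your construction is cleaner and more self-contained than the paper's matrix manipulation, and it does go through in the real case as well (the case analysis you defer --- whether $(h_1)_G$ is self-inverse --- works out, since $(h_i)_H=(h_i^{-1})_H$ for one index forces $(h_1)_G=(h_1^{-1})_G$). One caveat: ``after clearing denominators'' is not the right bridge from a kernel element in the space of class functions to one in the lattice $\R(H)$, because the coefficients of $\mathbf{1}_{(h_i)_H}$ in the irreducible-character basis are algebraic integers, not rationals. The correct justification --- which is exactly the content of the paper's $r=r'$ step --- is that the matrix of $\Ind_H^G$ in the character bases is integral, so a nontrivial complex kernel forces a nontrivial rational kernel vector, which can then be scaled to lie in the lattice. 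With that one sentence repaired, your argument is complete.
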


\noindent We apply the theorem above to determine for which prime powers $q$ the induction homomorphism $\Ind_H^G$ is a monomorphism in the case $H=\SL(2,q)$ and $G=\GL(2,q)$, the special and general linear groups of $(2\times 2)$-matrices with entries in the field $\mathbb{F}_q$ with $q$ elements. 
 
 Further, we prove the existence of Smith exotic actions using the induced modules. This can be summarized in the following theorem in which we use the notion of \emph{Smith matched} $\mathbb{R}G$-modules defined in section \ref{section:conditionsSufficiente} and the notion of \emph{$\mathcal{P}$-orientability} defined in the notation section.
\begin{theoremx}\label{theorem:B}
Let $G$ be an Oliver group and $N$ its normal subgroup such that $\Ind_N^G:\RO(N)\rightarrow\RO(G)$ is a monomorphism. If two non-isomorphic $\mathbb{R}N$-modules $U$ and $V$ are Smith matched and $\Ind_N^G(U)$ and $\Ind_N^G(V)$ are $\mathcal{P}$-oriented, then there exists a Smith exotic action of $G$ on a standard sphere with $\mathbb{R}G$-module structures at the two fixed points isomorphic to $\Ind_N^G(U)$ and $\Ind_N^G(V)$. \footnote{According to a private information of Masaharu Morimoto, in the equivariant surgery arguments used to get the result, we do not need to assume that the $\mathcal{P}$-orientability condition for $\Ind_N^G(U)$ and $\Ind_N^G(V)$ holds.}
\end{theoremx}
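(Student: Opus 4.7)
The plan is to first reduce the statement to a realization problem for $\mathbb{R}G$-modules, then transfer the Smith matched conditions from $N$ to $G$ via induction, and finally invoke an equivariant surgery realization theorem to produce the smooth action on the sphere.

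First I would record the easy but crucial observation: since $\Ind_N^G:\RO(N)\to\RO(G)$ is a monomorphism and $U\not\cong V$, one has $\Ind_N^G(U)\not\cong\Ind_N^G(V)$ as $\mathbb{R}G$-modules. Consequently, any smooth action of $G$ on a standard sphere with exactly two fixed points at which the tangent $\mathbb{R}G$-modules are $\Ind_N^G(U)$ and $\Ind_N^G(V)$ is automatically Smith exotic. Thus the theorem reduces to constructing such an action.

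Next I would verify that the pair $\bigl(\Ind_N^G(U),\Ind_N^G(V)\bigr)$ is Smith matched as a pair of $\mathbb{R}G$-modules. The computation rests on the Mackey-style decomposition
\[
(\Ind_N^G W)^K \;=\; \bigoplus_{KgN\in K\backslash G/N} W^{\,g^{-1}Kg\,\cap\, N}
\]
valid for any subgroup $K\le G$ and any $\mathbb{R}N$-module $W$. Using this I would rewrite each fixed-point dimension of $\Ind_N^G(U)$ and $\Ind_N^G(V)$ under a subgroup of $G$ in terms of fixed-point dimensions of $U$ and $V$ under subgroups of $N$, exploiting that $N\trianglelefteq G$ makes the summands in each double coset conjugate (hence of equal dimension). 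The Smith matched conditions for $U,V$ then yield the corresponding conditions for $\Ind_N^G(U),\Ind_N^G(V)$ clause by clause.

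With this data in hand the action is produced by the equivariant surgery realization theorem in the tradition of Laitinen--Morimoto--Pawa\l{}owski: for an Oliver group $G$, any pair of non-isomorphic $\mathcal{P}$-oriented Smith matched $\mathbb{R}G$-modules is realized as the pair of tangent $\mathbb{R}G$-modules at the two fixed points of a smooth $G$-action on a standard sphere. The Oliver hypothesis on $G$ and the $\mathcal{P}$-orientability of $\Ind_N^G(U),\Ind_N^G(V)$ are exactly the input data of this result.

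The main obstacle I expect is the middle step: while the Mackey decomposition itself is routine, translating each individual clause in the definition of \emph{Smith matched} (gap condition, dimension matchings at the relevant prime-power and $2$-subgroups, etc.) into a statement about $U$ and $V$ that is genuinely implied by the hypothesis ``$U$ and $V$ are Smith matched for $N$'' requires careful bookkeeping, in particular at subgroups $K\le G$ whose intersections $g^{-1}Kg\cap N$ sweep through a wider class of subgroups of $N$ than those explicitly controlled by the matching assumption. Once this is settled, the surgery invocation is essentially a citation.
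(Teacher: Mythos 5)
Your proposal is correct and follows the same architecture as the paper's proof: observe that injectivity of $\Ind_N^G$ preserves non-isomorphism, transfer the Smith matched conditions from $(U,V)$ to $(\Ind_N^G U,\Ind_N^G V)$ via a fixed-point dimension formula, and then cite the realization machinery (in the paper this is Oliver's two-fixed-point disk construction, doubling, and Morimoto's Deleting--Inserting Theorem). The one genuine difference is how you obtain the dimension formula: you use the Mackey double-coset decomposition $(\Ind_N^G W)^K\cong\bigoplus_{KgN}W^{g^{-1}Kg\cap N}$, whereas the paper computes the induced character directly, showing $\chi_{\Ind W}=\tfrac{|G|}{|N|}\chi_W$ on $N$ and $0$ off $N$, and averages over $K$ to get $\dim(\Ind_N^G W)^K=\tfrac{|G|}{|N|}\cdot\tfrac{|K\cap N|}{|K|}\dim W^{K\cap N}$. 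Be aware that your route needs the monomorphism hypothesis at exactly this point: the subgroups $g^{-1}(K\cap N)g$ appearing in your sum are only $G$-conjugate, not $N$-conjugate, so their fixed-point dimensions in $W$ need not agree a priori; it is the condition $(h)_G^{\pm}\cap N=(h)_N^{\pm}$ from Theorem~\ref{theorem:A} (forced by injectivity of $\Ind_N^G$ on $\RO(N)$) together with reality of $\chi_W$ that makes all double-coset summands equal, collapsing your sum to the single term $\tfrac{|G|\,|K\cap N|}{|N|\,|K|}\dim W^{K\cap N}$ and eliminating the ``wider class of subgroups'' you worry about in your last paragraph. Once that is in place, the clause-by-clause verification you sketch is exactly what the paper's Theorem~\ref{theorem1}, Lemma~\ref{lemma:2} (large subgroups of $G$ meet $N$ in large subgroups of $N$) and Lemma~\ref{lemma:3} (the $\widetilde{\PO}$ condition is preserved) carry out, so your plan closes with the same final citation.
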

In section \ref{section:conditionsSufficiente}, we point out that once we have two non-isomorphic Smith matched and $\mathcal{P}$-oriented $\mathbb{R}G$-modules $U$ and $V$, we can construct a Smith exotic action on a sphere with $\mathbb{R}G$-module structures at the fixed points isomorphic to $U$ and $V$. This conclusion and Theorem \ref{theorem:B} motivate us to state the following conjecture.
\begin{conjecturex}\label{conjecture:C}
Assume $G$ is an Oliver group and $N$ its normal subgroup such that $\Ind_N^G:\RO(N)\rightarrow\RO(G)$ is a monomorphism. Then, if $N$ admits Smith exotic actions, so does $G$.
\end{conjecturex}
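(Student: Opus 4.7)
The plan is to deduce Conjecture~\ref{conjecture:C} directly from Theorem~\ref{theorem:B} applied to the inclusion $N\trianglelefteq G$. Given a Smith exotic action of $N$ on a standard sphere $\Sigma$ with the two fixed points $x_+,x_-$, set $U=T_{x_+}\Sigma$ and $V=T_{x_-}\Sigma$; by assumption $U\not\cong V$ as $\mathbb{R}N$-modules. Because $U$ and $V$ are realised as the local tangent representations at the two fixed points of a smooth $N$-action on a sphere, they automatically satisfy the axioms of Smith matching defined in section~\ref{section:conditionsSufficiente}, since those axioms encode exactly the standard necessary conditions imposed on such tangent pairs. The candidate $\mathbb{R}G$-modules at the two fixed points of the sought $G$-action are then $\Ind_N^G(U)$ and $\Ind_N^G(V)$.

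The non-isomorphism $\Ind_N^G(U)\not\cong\Ind_N^G(V)$ is immediate from the hypothesis: $[U]-[V]\neq 0$ in $\RO(N)$, and the assumed injectivity of $\Ind_N^G$ forces $[\Ind_N^G(U)]-[\Ind_N^G(V)]\neq 0$ in $\RO(G)$. So the pair of induced modules is a pair of non-isomorphic $\mathbb{R}G$-modules to which one wishes to apply Theorem~\ref{theorem:B}; what remains is to check the other two hypotheses there, namely that the pair $(\Ind_N^G(U),\Ind_N^G(V))$ is Smith matched and that each induced module is $\mathcal{P}$-oriented. Once these are in place, Theorem~\ref{theorem:B} directly supplies a Smith exotic action of $G$ on a standard sphere with the prescribed tangent $\mathbb{R}G$-modules, yielding the conclusion of the conjecture.

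The main obstacle is verifying that Smith matching is preserved by the operation $W\mapsto \Ind_N^G(W)$ when $N\trianglelefteq G$. This amounts to checking, one axiom at a time, that each defining condition of Smith matching — agreement of characters on the appropriate subgroups, compatibility of equivariant dimensions with respect to the relevant family of subgroups, and the associated Sylow and gap-type restrictions — is stable under induction from a normal subgroup. For normal $N$ the double-coset formula degenerates and one can express $(\Ind_N^G W)\vert_H$ in terms of $G/N$-translates of $W$, which strongly suggests the required stability, but the verification must be carried out condition by condition and this is the technical heart of the argument. As for the $\mathcal{P}$-orientability hypothesis on the induced modules, I would sidestep it by invoking the footnote attributed to Morimoto following Theorem~\ref{theorem:B}, which asserts that the underlying equivariant-surgery construction does not in fact require it; granting this, the conjecture reduces entirely to the preservation of Smith matching under $\Ind_N^G$.
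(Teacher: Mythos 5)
The statement you are proving is presented in the paper as a \emph{conjecture}, not a theorem: the paper offers no proof of it, only Theorem~\ref{theorem:B} as motivation and a handful of examples in the final section where it can be verified by other means (mostly via the Laitinen conjecture or because the relevant Smith sets are trivial). So your proposal must stand on its own, and it does not. The decisive step --- the claim that the tangent $\mathbb{R}N$-modules $U=T_{x_+}\Sigma$ and $V=T_{x_-}\Sigma$ of an arbitrary Smith exotic $N$-action ``automatically satisfy the axioms of Smith matching, since those axioms encode exactly the standard necessary conditions imposed on such tangent pairs'' --- is false. The conditions of Definition~\ref{definition:smithMatched} are \emph{sufficient} hypotheses tailored to the Oliver--Morimoto construction, not necessary properties of fixed-point tangent modules. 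Concretely, condition (1) demands $U-V\in\widetilde{\PO}(N)$, i.e.\ that $U$ and $V$ restrict isomorphically to every subgroup of prime power order; but the Cappell--Shaneson examples for $N=C_8$ give Smith equivalent, non-isomorphic modules even though $\prim(C_8)=0$ and hence $\widetilde{\PO}(C_8)=0$, so their difference cannot lie in $\widetilde{\PO}(C_8)$. Likewise conditions (2)--(6) (weak gap, the bounds $\dim W^P\geq 5$, realization of every pseudocyclic subgroup as an isotropy subgroup, vanishing of $U^L$ and $V^L$ for large $L$, and $\dim U=\dim V\geq 6$) have no reason to hold for the tangent modules of a given exotic action. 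Without this step the argument never gets off the ground.

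A secondary point: the part you single out as ``the technical heart,'' namely that Smith matching is preserved by $\Ind_N^G$ when $N$ is normal and the induction is injective on $\RO$, is not the missing ingredient --- it is exactly what the paper already establishes (Theorem~\ref{theorem1}, Lemma~\ref{lemma:2}, Lemma~\ref{lemma:3}) in the course of proving Theorem~\ref{theorem:B}. The genuine obstruction to upgrading Conjecture~\ref{conjecture:C} to a theorem sits at the input end: one cannot in general extract Smith matched $\mathbb{R}N$-modules from an arbitrary Smith exotic action of $N$, and this is precisely why the author leaves the statement as a conjecture.
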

\noindent We wish to recall the Dovermann-Suh Conjecture \cite[p. 44]{Dovermann1992} which asserts that for any Oliver group $G$ and its subgroup $H$, the negative answer to the Smith question for $H$ yields the same answer for $G$. Counterexamples to their conjecture are given in \cite{Pawalowski2018}. One may consider Conjecture \ref{conjecture:C} as a modified version of the Dovermann-Suh Conjecture, which we hope is true.

The paper is organized as follows. First, we fix the notation used to prove the main results. Further contents is divided into three parts. In the first we prove Theorem \ref{theorem:A} and give its application for $\SL(2,q)\trianglelefteq\GL(2,q)$. The second part concerns Theorem \ref{theorem:B}. In the first paragraph we list sufficient conditions for exotic constructions we are interested in. Next, we deal with ensuring these conditions for induced actions and prove Theorem \ref{theorem:B}. At the end, we present an application to the case of the direct product of $\SL(2,5)$ and an arbitrary finite group. The third part provides examples of groups for which Conjecture \ref{conjecture:C} holds.
\section*{Notation}
All groups considered in this paper are assumed to be finite. By an \emph{action} of a group $G$ on a smooth manifold $M$ we mean a homomorphism $\varphi:G\rightarrow\Diff(M)$, where $\Diff(M)$ denotes the group of all diffeomorphisms of $M$. Instead of writing formally $\varphi(g)(x)$ for a value of a diffeomorphism $\varphi(g)$ at point $x\in M$, we use the notation $gx$ when $\varphi$ can be inferred from the context. We also use the notion of a \emph{linear action} (we shall call it by \emph{action} as well) of $G$ on an $n$-dimensional vector space $V$ over the field $F$. Such an action is a homomorphism $\varphi:G\rightarrow\GL(n,F)$. As before, we use an abbreviation $gv$ for $\varphi(g)(v)$ for $g\in G$ and $v\in V$. The linear action of $G$ on $V$ endows it with a structure of an $n$-dimenensional $FG$-module (see \cite{Liebeck2001}).

We recall the notions from \cite{MorimotoPawalowski2003} which we will need later. Given a group $G$ and a field $F$, the set of $FG$-modules admits a structure of a semigroup with the operation of direct sum. Applying the Grothendieck construction to this semigroup, we obtain a \emph{representation group} of $G$ over the field $F$. We use the notation $U-V$ for the element of the representation group of $G$ determined by the equivalence class of a pair $(U,V)$ of $FG$-modules. Recall that, if $F$ is of characteristic $0$, then the representation group is a torsion-free abelian group. If $F=\mathbb{C}$, we use the symbol $\R(G)$ for the representation group of $G$, while if $F=\mathbb{R}$, we denote this group by $\RO(G)$. Let $\alpha(G)$ and $\beta(G)$ be the numbers of conjugacy and real conjugacy classes of $G$ respectively. Note that $\rank(\R(G))=\beta(G)$ and $\rank(\RO(G))=\beta'(G)$.

The \emph{primary group} of $G$ \cite{Pawalowski2018}, denoted by $\PO(G)$, is the subgroup of all elements $U-V\in \RO(G)$ such that $\Res_P^G(U)\cong\Res_P^G(V)$ for any subgroup $P\leq G$ of prime power order, i.e. the restrictions of $U$ and $V$ to subgroups of $G$ of prime power order are isomorphic. The \emph{reduced primary group} of $G$, denoted by $\widetilde{\PO}(G)$, consists of all elements $U-V\in \PO(G)$ such that $U^G=V^G=\{0\}$. The primary number of $G$ which we denote by $\prim(G)$ is the number of real conjugacy classes of $G$ containing elements which order is divisible by at least two different primes. The rank of $\PO(G)$ equals then $\prim(G)$, while the rank of $\widetilde{\PO}(G)$ is $\prim(G)-1$, see \cite[Lemma 2.1]{LaitinenPawalowski1999}. The \emph{Smith set}, $\Sm(G)$, of a group $G$ is the set of elements $U-V\in\RO(G)$ such that there exists a two fixed point action on a homotopy sphere $\Sigma$ with the $\mathbb{R}G$-modules structures at the two fixed points isomorphic to $U\oplus W$ and $V\oplus W$ for some $\mathbb{R}G$-module $W$. If $U-V\in\Sm(G)$ for some $\mathbb{R}G$-modules $U$ and $V$, then we say that $U$ and $V$ are \emph{Smith equivalent}. Using the surgery theory developed by Morimoto \cite{Morimoto1998}, the word 'homotopy sphere' in the definition of the Smith set can be replaced by 'standard sphere'. The primary Smith set of $G$, $\PSm(G)$, is defined as the intersection $\Sm(G)\cap\widetilde{\PO}(G)$. 
We also need the notion of a \emph{pseudocyclic group}. A group $G$ is called pseudocyclic if for some prime $p$, $G$ contains a normal $p$-subgroup $P$ such that $G/P$ is cyclic. Obviously, all pseudocyclic groups are not Oliver groups. We denote by $\mathcal{PC}(G)$ the family of all pseudocyclic subgroups of $G$. For a given group $G$, let us denote by $\mathcal{P}(G)$ the family of all subgroups of $G$ which are of prime power order.

For a prime $p$, let us use the notation $\mathcal{O}^p(G)$ for the smallest normal subgroup of $G$ such that the quotient of $G$ by this subgroup is a $p$-group. A subgroup $H$ of a group $G$ is called \emph{large} if $\mathcal{O}^p(G)\leq H$ for some prime $p$. We denote by $\mathcal{L}(G)$ the family of all large subgroups of $G$. We say that an $\mathbb{R}G$-module $V$ satisfies the $\emph{weak gap condition}$ if for any subgroup $P\in\mathcal{P}(G)$ and $P<H\leq G$, we have $\dim V^P\geq 2\dim V^H$. Moreover, we say that $V$ is \emph{$\mathcal{P}$-oriented} if $V^P$ is oriented as a vector space for any $P\in\mathcal{P}$ and the transformation $\theta_g:V^P\rightarrow V^P, v\mapsto gv$ preserves the orientation for any $g\in N_G(P)$, where $N_G(P)$ denotes the normalizer of $P$ in $G$.
\section{Induction homomorphisms}
\subsection{Proof of Theorem \ref{theorem:A}}
Assume $H$ is a subgroup of a group $G$. For the simplicity, let us denote by $\Ind_{\mathbb{C}}$ the induction homomorphism $\Ind_H^G:\R(H)\rightarrow\R(G)$ and by $\Ind_{\mathbb{R}}$ its restriction to $\RO(H)$. Define $a$ and $a'$ to be the numbers of conjugacy and real conjugacy classes of $G$ respectively which have non-zero intersection with $H$ and put $b=\beta(H)$ and $b'=\beta'(H)$.

We will need the recipe for computing induced characters.
\begin{theorem}\label{theorem:inducedCharacterFormula}\cite[21.23. Theorem]{Liebeck2001}
Let $\chi$ be a character of $H$ and $g\in G$. Then, we have two possibilities.
\begin{enumerate}[(1)]
    \item if $H\cap (g)=\emptyset$, then $\Ind_{\mathbb{C}}(\chi)(g)=0$.
    \item if $H\cap (g)\neq\emptyset$, then
    $$
    \Ind_{\mathbb{C}}(\chi)(g)=|C_G(g)|\Big(\frac{\chi(h_1)}{|C_H(h_1)|}+\ldots+\frac{\chi(h_m)}{|C_H(h_m)|}\Big),
    $$
\end{enumerate}
where $C_K(x)$ denotes the centralizer of the element $x$ of the group $K$ and $h_1,\ldots,h_m$ are the representatives of all the distinct conjugacy classes in $H$ of the elements of the set $H\cap (g)$.
\end{theorem}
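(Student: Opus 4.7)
The plan is to unfold the standard definition of the induced character and then repackage the sum via orbit--stabilizer. Extend $\chi$ to a class function $\dot{\chi}:G\to\mathbb{C}$ by setting $\dot{\chi}(x)=\chi(x)$ if $x\in H$ and $\dot{\chi}(x)=0$ otherwise; then
$$
\Ind_{\mathbb{C}}(\chi)(g)=\frac{1}{|H|}\sum_{x\in G}\dot{\chi}(x^{-1}gx).
$$
Part (1) is then immediate: if $H\cap (g)=\emptyset$, then $x^{-1}gx\notin H$ for every $x\in G$, so every summand is zero.

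For part (2), I would partition the summation set $G$ according to which $H$-conjugacy class contains $x^{-1}gx$. The conceptual point is that $H\cap (g)$, although contained in a single $G$-conjugacy class, generally splits into several $H$-conjugacy classes; by assumption these are represented by $h_1,\dots,h_m$. For a fixed $i$ and any $y\in (g)$, the fiber $\{x\in G:x^{-1}gx=y\}$ is a coset of $C_G(g)$ and therefore has exactly $|C_G(g)|$ elements. Since $(h_i)_H\subseteq (g)$, the number of $x\in G$ with $x^{-1}gx\in (h_i)_H$ equals $|(h_i)_H|\cdot|C_G(g)|$, and each such $x$ contributes $\chi(h_i)$ to the sum because $\chi$ is a class function on $H$.

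Combining these counts and applying the orbit--stabilizer identity $|(h_i)_H|=|H|/|C_H(h_i)|$ gives
$$
\Ind_{\mathbb{C}}(\chi)(g)=\frac{1}{|H|}\sum_{i=1}^{m}|C_G(g)|\cdot\frac{|H|}{|C_H(h_i)|}\cdot\chi(h_i)=|C_G(g)|\sum_{i=1}^{m}\frac{\chi(h_i)}{|C_H(h_i)|},
$$
which is the stated formula. There is no real obstacle; the only subtle point is to remember that the single $G$-class of $g$ may break into several $H$-classes when intersected with $H$, so one must sum over \emph{all} of the representatives $h_1,\dots,h_m$ rather than expect a single term. Once this is kept in mind, the derivation is a bookkeeping exercise combining the definition of $\Ind$, the double-counting of $x\mapsto x^{-1}gx$ via $C_G(g)$, and the orbit--stabilizer formula inside $H$.
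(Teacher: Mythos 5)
Your argument is correct and is essentially the standard textbook derivation: the paper itself offers no proof of this statement, citing it directly from James--Liebeck, and the proof given there proceeds exactly as you do, starting from the Frobenius formula $\Ind_{\mathbb{C}}(\chi)(g)=\frac{1}{|H|}\sum_{x\in G}\dot{\chi}(x^{-1}gx)$, counting the fibers of $x\mapsto x^{-1}gx$ as cosets of $C_G(g)$, and regrouping $H\cap(g)$ into its $H$-conjugacy classes via orbit--stabilizer. No gaps; the one point worth being explicit about (that each $(h_i)_H$ is contained in $H\cap(g)$ and these classes partition it) is implicit in your write-up but harmless.
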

\begin{lemma}\label{lemma:boundForRank}
$\rank(\Ima(\Ind_{\mathbb{C}}))\leq\min(a,b)$ and $\rank(\Ima(\Ind_{\mathbb{R}}))\leq\min(a',b')$.
\end{lemma}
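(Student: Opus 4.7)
The bounds $\rank(\Ima(\Ind_{\mathbb{C}}))\leq b$ and $\rank(\Ima(\Ind_{\mathbb{R}}))\leq b'$ are immediate, since the rank of the image of a group homomorphism cannot exceed the rank of its source, and $\rank(\R(H))=\beta(H)=b$, while $\rank(\RO(H))=\beta'(H)=b'$. The content of the lemma therefore lies in the inequalities involving $a$ and $a'$, which I plan to extract from clause (1) of Theorem~\ref{theorem:inducedCharacterFormula}.

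First I would observe that by that clause, for every character $\chi$ of $H$ the induced character $\Ind_{\mathbb{C}}(\chi)$ vanishes on every $G$-conjugacy class $(g)$ disjoint from $H$; by $\mathbb{Z}$-linearity the same holds for every element of $\Ima(\Ind_{\mathbb{C}})$, viewed as a virtual character of $G$. Let $K\leq\R(G)$ denote the subgroup of virtual characters that vanish on each of the $\beta(G)-a$ conjugacy classes of $G$ disjoint from $H$; then $\Ima(\Ind_{\mathbb{C}})\leq K$. Using that the irreducible characters of $G$ form a $\mathbb{Z}$-basis of $\R(G)$ and that the character table is nonsingular, character evaluation produces an isomorphism $\R(G)\otimes\mathbb{Q}\cong\mathbb{Q}^{\beta(G)}$ under which $K\otimes\mathbb{Q}$ corresponds to the coordinate subspace cut out by vanishing on the $\beta(G)-a$ classes disjoint from $H$. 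Hence $\rank(K)=a$, and the desired bound follows.

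For the real version, I would first note that $H$ is closed under taking inverses, so a real conjugacy class $(g)^{\pm}$ is disjoint from $H$ if and only if both $(g)$ and $(g^{-1})$ are disjoint from $H$; this shows that $a'$ equals the number of real conjugacy classes of $G$ meeting $H$. Exactly as in the complex case, every element of $\Ima(\Ind_{\mathbb{R}})$ is a real virtual character of $G$ vanishing on all $G$-classes disjoint from $H$, hence on all real classes disjoint from $H$. Using the analogous isomorphism $\RO(G)\otimes\mathbb{Q}\cong\mathbb{Q}^{\beta'(G)}$ supplied by evaluation on real conjugacy classes, this confines $\Ima(\Ind_{\mathbb{R}})$ to a subgroup of $\RO(G)$ of rank $a'$. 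The main technical point is the identification of these rank-$a$ and rank-$a'$ subgroups through the respective character evaluation isomorphisms; once this is granted, both bounds drop out mechanically from the vanishing condition supplied by Theorem~\ref{theorem:inducedCharacterFormula}.
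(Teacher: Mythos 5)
Your proposal is correct and takes essentially the same approach as the paper: both arguments combine the trivial bound by the rank of the source with the observation, from clause (1) of Theorem~\ref{theorem:inducedCharacterFormula}, that every induced virtual character vanishes on the conjugacy classes of $G$ disjoint from $H$ --- the paper records this as $\beta(G)-a$ zero columns in the value matrix $A$ of the induced characters (together with the matrix identity $C=(AX^{-1})^{T}$ and a rational-versus-integral rank comparison), while you record it as containment of $\Ima(\Ind_{\mathbb{C}})$ in a subgroup of rank $a$ (and similarly $a'$ in the real case). The one imprecision is your claim that character evaluation gives an isomorphism $\R(G)\otimes\mathbb{Q}\cong\mathbb{Q}^{\beta(G)}$: character values need not be rational, so you should instead tensor with $\mathbb{C}$ and use that evaluation embeds $\R(G)\otimes\mathbb{C}$ into $\mathbb{C}^{\beta(G)}$, which still confines $K\otimes\mathbb{C}$ to an $a$-dimensional coordinate subspace and yields $\rank(K)\leq a$ as required.
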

\begin{proof}
Let $C$ be the matrix of $\Ind_{\mathbb{C}}$ (we take the standard bases of $\R(H)$ and $\R(G)$ consisting of complex irreducible characters). Assume $\chi_1,...,\chi_n$ are complex irreducible characters of $H$ and $\psi_1,...,\psi_m$ are complex irreducible characters of $G$ (note that $b=n$). Let
$$
\Ind_{\mathbb{C}}(\chi_i)=\sum_{k=1}^mb_{ik}\psi_k=(a_{i1},...,a_{im})
$$
where $a_{ij}=\Ind_{\mathbb{C}}(\chi_i)(g_j)$ and $G=(g_1)\cup...\cup(g_m)$ is the decomposition of $G$ into conjugacy classes with representatives $g_j$, $1\leq j\leq m$. Let $\psi_j=(\psi_{j1},...,\psi_{jm})$, where $\psi_{jl}=\psi_j(g_l)$. Then $A=BX$, where
$$
\begin{tabular}{ccc}
$
A=\begin{pmatrix}
    a_{11}       &  \dots & a_{1m} \\
    \hdotsfor{3} \\
    a_{n1}       &\dots & a_{nm}
\end{pmatrix}
$
&
$
B=\begin{pmatrix}
    b_{11}       &  \dots & b_{1m} \\
    \hdotsfor{3} \\
    b_{n1}       &\dots & b_{nm}
\end{pmatrix}
$
&
$
X=\begin{pmatrix}
    \psi_{11}       &  \dots & \psi_{1m} \\
    \hdotsfor{3} \\
    \psi_{m1}       &\dots & \psi_{mm}
\end{pmatrix}
$
\end{tabular}
$$
and $b_{ij}$ are non-negative integers for $1\leq i\leq n$ and $1\leq j\leq m$. Then $C=B^T$, since 
\begin{center}
    $C(\chi_i)=\begin{pmatrix}
    b_{i1} \\
    \vdots \\
    b_{in}
\end{pmatrix}$
\end{center}
Thus $C=(AX^{-1})^T$ and 
\begin{align*}
    \rank(C)&=\rank(AX^{-1})=\rank(AX)\leq\min(\rank(A),\rank(X))\\
    &=\min(\rank(A),m)\leq\rank(A)
\end{align*}
It follows that $\rank(A)\leq\min(a,n)$. Indeed, from Theorem \ref{theorem:inducedCharacterFormula} we conclude that $a_{ij}=0$ whenever $(g_j)\cap H=\emptyset$. Thus, since the characters of $G$ are constant on the conjugacy classes of $G$, we have $\rank(A)\leq a$. Obviously $\rank(A)\leq n$ as $n$ is the dimension of $\R(H)$. Therefore, since $b=n$, we have $\rank(C)\leq\min(a,b)$. Since $C$ is an integer matrix and $\mathbb{C}$ is an extension of $\mathbb{Q}$, we conclude that the complex rank of $C$ equals its rational rank, that is $\rank(C)=\rank_{\mathbb{Q}}(C)$. Now, denote by $r$ and $r'$ the ranks of $\Ima(\Ind_{\mathbb{C}})$ and $C$ respectively. We show that $r=r'$ which would mean that $r\leq\min(a,b)$ and would complete the proof. Obviously, $r\geq r'$. Let $V$ be the vector space over $\mathbb{Q}$ spanned by the generators of $\R(G)$. Take any $r'+1$ elements $v_1,...,v_{r'+1}$ from $\Ima(\Ind_{\mathbb{C}})$. They can be considered as vectors from $V$. Note that they are linearly dependent, since the dimension of $\Ima(\Ind_{\mathbb{C}})$ considered as a subspace of $V$ equals $r'$. Let
\begin{equation}\label{equation:linearCombination}
\alpha_1v_1+...+\alpha_{r'+1}v_{r'+1}=0
\end{equation}
be a nontrivial combination. Suppose $\{\alpha_{i_1},...,\alpha_{i_k}\}$ is the set of all nonzero coefficients and $\alpha_{i_j}=p_j/q_j$, where $p_j,q_j\in\mathbb{Z}\setminus{\{0\}}$ for $1\leq j\leq k$. Multiplying both sides of equality (\ref{equation:linearCombination}) by $q_1...q_k$, we get a nontrivial integer combination of $v_j$'s. Thus $r\leq r'$ and, as a result $r=r'$.

The proof of part $(2)$ is analogous.
\end{proof}
\begin{lemma}\label{lemma:trace}
Assume $A\in\GL(n,\mathbb{C})$ is of finite order. Then $\tr(A^{-1})=\overline{\tr(A)}$.
\end{lemma}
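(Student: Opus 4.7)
The plan is to reduce the claim to a statement about eigenvalues via diagonalization. First I would observe that because $A$ has finite order, say $A^m = I$ for some positive integer $m$, its minimal polynomial divides $x^m - 1$. Since $x^m - 1$ has only simple roots over $\mathbb{C}$, the minimal polynomial of $A$ splits into distinct linear factors, and consequently $A$ is diagonalizable over $\mathbb{C}$.

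Next I would write $A = PDP^{-1}$ with $D = \mathrm{diag}(\lambda_1, \ldots, \lambda_n)$, where every $\lambda_i$ is a root of $x^m - 1$, i.e.\ an $m$-th root of unity. In particular each $\lambda_i$ lies on the unit circle, so $|\lambda_i|^2 = \lambda_i \overline{\lambda_i} = 1$, which gives $\lambda_i^{-1} = \overline{\lambda_i}$.

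Finally, since $A^{-1} = P D^{-1} P^{-1}$, the trace is similarity-invariant and we compute
\[
\tr(A^{-1}) = \sum_{i=1}^n \lambda_i^{-1} = \sum_{i=1}^n \overline{\lambda_i} = \overline{\sum_{i=1}^n \lambda_i} = \overline{\tr(A)},
\]
which is the desired identity. The argument is essentially routine; the only conceptual point to be careful about is the justification of diagonalizability, which I would handle as above by invoking the fact that $x^m - 1$ is separable over $\mathbb{C}$.
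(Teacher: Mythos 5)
Your proof is correct and follows essentially the same route as the paper: both arguments reduce to the observation that the eigenvalues of a finite-order matrix are roots of unity, hence satisfy $\lambda^{-1}=\overline{\lambda}$, and then sum over eigenvalues to compare traces. The only cosmetic difference is that the paper conjugates $A$ into Jordan canonical form (so diagonalizability is not needed, only upper-triangularity), whereas you invoke diagonalizability via the separability of $x^m-1$; both justifications are valid.
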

\begin{proof}
We may assume that $A$ is already in its Jordan canonical form (since conjugation in $\GL(n,\mathbb{C})$ is trace-invariant). If $\lambda_1,...,\lambda_n$ are the eigenvalues of $A$, then $\lambda_1^{-1},...,\lambda_n^{-1}$ are the eigenvalues of $A^{-1}$, for $A$ is upper-triangular. Note that $\lambda_i$ is a $k$-th root of unity for $i=1,...,n$. Indeed, if $v\in\mathbb{C}^n$ is an eigenvector corresponding to $\lambda_i$, then
$$
v=I_nv=A^kv=\lambda_i^kv
$$
Thus $\lambda_i^k=1$ since $v$ is non-zero. Thus $\lambda_i^{-1}=\overline{\lambda_i}$ for $i=1,...,n$ and, as a result,
$$
\tr(A^{-1})=\lambda_1^{-1}+...+\lambda_n^{-1}=\overline{\lambda_1}+...+\overline{\lambda_n}=\overline{\lambda_1+...+\lambda_n}=\overline{\tr(A)}
$$
\end{proof}
\begin{proof}[Proof of Theorem \ref{theorem:A}]
Let us prove the first part. Assume that for any $h\in H$ its conjugacy class in $G$ coincides with its conjugacy class in $H$, that is $(h)_G=(h)_H$. Let $\chi$ be a character of $H$. Pick $g\in G$. If $H\cap (g)=\emptyset$, then, by Theorem \ref{theorem:inducedCharacterFormula}, we get $\Ind_{\mathbb{C}}(\chi)(g)=0$. Otherwise, if $h\in H\cap(g)$, then 
$$
\Ind_{\mathbb{C}}(\chi)(g)=\frac{|C_G(h)|}{|C_H(h)|}\chi(h)
$$
Thus, if $\chi\neq\psi$ are complex characters of $H$ with $\chi(h)\neq\psi(h)$ for some $h\in H$, then
$$
\Ind_{\mathbb{C}}(\chi)(h)=\frac{|C_G(h)|}{|C_H(h)|}\chi(n)\neq\frac{|C_G(h)|}{|C_H(h)|}\psi(n)=\Ind_{\mathbb{C}}(\psi)
$$
Therefore $\Ind_{\mathbb{C}}$ is injective.

Let us prove now the converse implication. Suppose $\Ind_{\mathbb{C}}$ is a monomorphism. For the converse, assume that there exists $h\in H$ with $(h)_H\neq(h)_G\cap H$. Then $(h)_H\subsetneq(h)_G\cap H$ and therefore $a<b$. Thus, by Lemma \ref{lemma:boundForRank}, $\rank(\Ima(\Ind_{\mathbb{C}}))<b$, so $\Ind_{\mathbb{C}}$ cannot be injective. A contradiction.

We show the second part now. Suppose that for any $h\in H$ we have $(h)^{\pm}_G\cap H=(h)^{\pm}_H$. Let $\chi$ and $\psi$ be two different characters of $H$. We must show that $\Ind_{\mathbb{R}}(\chi)\neq\Ind_{\mathbb{R}}(\psi)$. Take $h\in H$ with $\chi(h)\neq\psi(h)$. We have two possibilities. The first one is when $(h)_H=(h^{-1})_H=(h)_G^{\pm}\cap H$. Thus 
$$(h)_G\cap H=(h^{-1})_G\cap H=(h)^{\pm}_G\cap H=(h)^{\pm}_H=(h)_H$$
and, as in the proof of the first part, we get
$$
\begin{tabular}{ccc}
$
\Ind_{\mathbb{R}}(\chi)(h)=\frac{|C_G(h)|}{|C_H(h)|}\chi(h)
$
&
and
&
$
\Ind_{\mathbb{R}}(\psi)(h)=\frac{|C_G(h)|}{|C_H(h)|}\psi(h)
$
\end{tabular}
$$
 Therefore $\Ind_{\mathbb{R}}(\chi)(h)\neq\Ind_{\mathbb{R}}(\psi)(h)$ since $\chi(h)\neq\psi(h)$. In the second possibility, we have $(h)_H\neq(h^{-1})_H$. If $(h)_G\cap H=(h)_H$, we have already proved the assertion. Otherwise, since $(h)^{\pm}_G\cap H=(h)^{\pm}_H$, we conclude that $(h)_G\cap H=(h^{-1})_G$ (otherwise $(h)_G\cap H=(h)_H$ and $(h^{-1})_G\cap H=(h^{-1})_H$ which we have already considered). Thus $(h)_G=(h)_H\cup(h^{-1})_H$. Note that $|C_H(h)|=|C_H(h^{-1})|$. Thus by Theorem \ref{theorem:inducedCharacterFormula} and Lemma \ref{lemma:trace}, we get 
 $$
\begin{tabular}{ccc}
$
\Ind_{\mathbb{R}}(\chi)(h)=2\frac{|C_G(h)|}{|C_H(h)|}\chi(h)
$
&
and
&
$
\Ind_{\mathbb{R}}(\psi)(h)=2\frac{|C_G(h)|}{|C_H(h)|}\psi(h)
$
\end{tabular}
$$
Thus $\Ind_{\mathbb{R}}(\chi)(h)\neq\Ind_{\mathbb{R}}(\psi)(h)$.

We prove now the converse. Suppose $\Ind_{\mathbb{R}}$ is a monomorphism. Assume for the contrary that there exists $h\in H$ with $(h)^{\pm}_G\cap H\neq(h)^{\pm}_H$. Then $(h)^{\pm}_H\subsetneq(h)^{\pm}_G\cap H$ and thus $a'<b'$. Hence, it follows by Lemma \ref{lemma:boundForRank} that $\rank(\Ima(\Ind_{\mathbb{R}}))<b'$ and $\Ind_{\mathbb{R}}$ is not injective which is a contradiction with our assumption.
\end{proof}
\begin{corollary}\label{corollary:inducedNormal}
Assume $N$ is a normal subgroup of $G$. Then the following holds.
\begin{enumerate}[(1)]
    \item $\Ind_N^G:\R(N)\rightarrow\R(G)$ is a monomorphism if and only if the conjugacy classes in $G$ of elements from $N$ coincide with their conjugacy classes in $N$.
    \item $\Ind_N^G:\RO(N)\rightarrow\RO(G)$ is a monomorphism if and only if the real conjugacy classes in $G$ of elements from $N$ coincide with their real conjugacy classes in $N$.
\end{enumerate}
\end{corollary}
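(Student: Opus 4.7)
The plan is to deduce the corollary directly from Theorem \ref{theorem:A} by exploiting the normality of $N$ to simplify the intersection conditions appearing there. Concretely, I would instantiate Theorem \ref{theorem:A} with $H=N$, which immediately reduces the statement to checking that the condition ``$(h)_G\cap N=(h)_N$ for all $h\in N$'' is equivalent to ``$(h)_G=(h)_N$ for all $h\in N$'', and analogously in the real case.

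The key observation driving the reduction is that when $N\trianglelefteq G$, the $G$-conjugacy class of any $h\in N$ is already contained in $N$: for every $g\in G$, one has $ghg^{-1}\in N$ by normality. Hence $(h)_G\subseteq N$, so the intersection $(h)_G\cap N$ simply equals $(h)_G$. Applying the same reasoning to $h^{-1}\in N$ yields $(h^{-1})_G\subseteq N$, and therefore $(h)_G^{\pm}=(h)_G\cup(h^{-1})_G\subseteq N$, giving $(h)_G^{\pm}\cap N=(h)_G^{\pm}$.

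With these simplifications in hand, part (1) of Theorem \ref{theorem:A} turns into the statement that $\Ind_N^G:\R(N)\rightarrow\R(G)$ is a monomorphism iff $(h)_G=(h)_N$ for every $h\in N$, which is precisely part (1) of the corollary. Similarly, part (2) of Theorem \ref{theorem:A} becomes the statement that $\Ind_N^G:\RO(N)\rightarrow\RO(G)$ is a monomorphism iff $(h)_G^{\pm}=(h)_N^{\pm}$ for every $h\in N$, giving part (2) of the corollary.

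There is essentially no genuine obstacle here; the content is already packaged in Theorem \ref{theorem:A}, and the only work is to record that normality forces the set-theoretic intersections with $N$ to be trivial. I would therefore present the proof as a two- or three-line argument invoking Theorem \ref{theorem:A} and the inclusion $(h)_G\subseteq N$ for $h\in N$.
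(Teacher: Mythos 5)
Your proposal is correct and matches the paper's (implicit) argument: the corollary is stated without proof as an immediate consequence of Theorem \ref{theorem:A}, and the intended deduction is exactly your observation that normality of $N$ gives $(h)_G\subseteq N$ for $h\in N$, so the intersection conditions collapse to $(h)_G=(h)_N$ and $(h)_G^{\pm}=(h)_N^{\pm}$. Nothing further is needed.
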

\subsection{Application to the case $\SL(2,q)\trianglelefteq\GL(2,q)$}
Put $N=\SL(2,q)$ and $G=\GL(2,q)$ where $q$ is a prime power. Consider the following elements of $N$.
\begin{center}
\begin{tabular}{ll*{1}{c}r}
$\mathbf{1}=\begin{pmatrix}1&0\\0&1\end{pmatrix}$     
& $z=\begin{pmatrix}-1&0\\0&-1\end{pmatrix}$ 
&$c=\begin{pmatrix}1&0\\1&1\end{pmatrix}$ \\
$d=\begin{pmatrix}1&0\\\nu&1\end{pmatrix}$     
&$a=\begin{pmatrix}\nu&0\\0&\nu^{-1}\end{pmatrix}$ 
&$b$ \\
\end{tabular}
\end{center}
where $\nu$ is a generator of $\mathbb{F}_q^{\times}$ and $b$ is an element of order $q+1$. The conjugacy classes of $N$ can be characterized as follows (see \cite[Theorem 4.1, Theorem 6.1]{Braun2016}).
\begin{lemma}\label{lemma:conjugacyClassesSL2q}
If $q=2^k$, $k\geq 1$, then the conjugacy classes of $N$ are $(\mathbf{1})$, $(c)$, $(a^l)$, $(b^m)$, where $1\leq l\leq (q-2)/2$ and $1\leq m\leq q/2$. In the case $q$ is odd, the conjugacy classes of $N$ are $(\mathbf{1})$, $(z)$, $(c)$, $(d)$, $(zc)$, $(zd)$, $(a^l)$, $(b^m)$, where $1\leq l\leq (q-1)/2$ and $1\leq m\leq (q-1)/2$.
\end{lemma}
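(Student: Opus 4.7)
The plan is to classify elements of $N = \SL(2,q)$ up to conjugation by their rational canonical form, using that $\det A = 1$ forces the characteristic polynomial to be $p_A(x) = x^2 - \tr(A)x + 1$. Each $A\in N$ falls into exactly one of three regimes according to the factorization of $p_A$ over $\mathbb{F}_q$: two distinct roots in $\mathbb{F}_q$ (split semisimple), irreducible over $\mathbb{F}_q$ (non-split semisimple), or a repeated root (central-or-unipotent). I would treat each regime, first describing the $\GL(2,q)$-class and then determining how it decomposes inside $N$.

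In the split semisimple case, $A$ is diagonalizable with eigenvalues $\lambda,\lambda^{-1}$ where $\lambda = \nu^l$ for a unique $l\in\mathbb{Z}/(q-1)$ up to the involution $l\mapsto -l$, so $A$ is $N$-conjugate to $a^l$. The identification $a^l\sim a^{-l}$ is implemented by $w = \begin{pmatrix}0 & 1\\-1 & 0\end{pmatrix}\in N$, since a direct computation gives $wa^l w^{-1} = a^{-l}$. In the non-split case, the eigenvalues lie in $\mathbb{F}_{q^2}\setminus\mathbb{F}_q$ and satisfy $\lambda^{q+1} = \lambda\cdot\lambda^q = \det A = 1$, so they lie in the cyclic subgroup of $\mathbb{F}_{q^2}^\times$ of order $q+1$. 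Fixing the element $b\in N$ of order $q+1$, one checks that $A$ is $N$-conjugate to a unique $b^m$ modulo $m\mapsto -m$ (again realized by $w$, which normalizes the anisotropic torus and inverts $b$). Counting the $\pm 1$-orbits on cyclic groups of orders $q-1$ and $q+1$, and discarding those orbits that produce scalar matrices, yields the ranges of $l$ and $m$ asserted.

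The delicate case is the repeated-root regime, where $\lambda = \pm 1$. If $A$ is scalar, then $A\in\{\mathbf{1},z\}$, giving one central class for $q = 2^k$ and two for $q$ odd. Otherwise $A$ is $\GL(2,q)$-conjugate to $\pm\begin{pmatrix}1 & 0\\ 1 & 1\end{pmatrix}$. The number of $N$-classes sitting inside a single $\GL(2,q)$-class $(A)_{\GL}$ equals $[\mathbb{F}_q^\times : \det C_{\GL(2,q)}(A)]$ by the usual orbit-stabilizer argument applied to $\GL(2,q)/N \cong \mathbb{F}_q^\times$. A short computation shows that $C_{\GL(2,q)}(c)$ consists of matrices $\begin{pmatrix} s & 0 \\ t & s \end{pmatrix}$ with $s\in\mathbb{F}_q^\times$ and $t\in\mathbb{F}_q$, so its image under $\det$ is the subgroup $(\mathbb{F}_q^\times)^2$ of squares. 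When $q$ is odd this subgroup has index $2$, forcing the $\GL(2,q)$-class of $c$ to split into exactly two $N$-classes; these are represented by $c$ and $d$ because conjugating $c$ by $\operatorname{diag}(\nu,1)\in\GL(2,q)\setminus N$ produces $d$. The same argument applied to $zc$ and $zd$ handles the remaining subcase. When $q = 2^k$, every element of $\mathbb{F}_q^\times$ is a square, the index is $1$, and no splitting occurs.

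I expect the main obstacle to be precisely this last step, namely verifying cleanly that the determinants of the unipotent centralizer realize exactly the squares in $\mathbb{F}_q^\times$ and confirming that $c,d,zc,zd$ represent four genuinely distinct $N$-classes rather than coinciding pairs. Once this is done, assembling the three regimes produces the parametrization in the lemma; a count against the well-known totals of $q+1$ classes for $q$ a power of $2$ and $q+4$ classes for $q$ odd serves as a sanity check for the ranges of $l$ and $m$.
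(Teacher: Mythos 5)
The paper offers no proof of this lemma at all: it is quoted verbatim from Braun's Theorems~4.1 and~6.1, so your self-contained argument is necessarily a different route, and it is the standard, correct one --- sort elements of $N=\SL(2,q)$ by the factorization of $x^2-\tr(A)x+1$, identify the split and non-split semisimple classes with powers of $a$ and $b$ modulo inversion, and control the splitting of a $\GL(2,q)$-class into $N$-classes via the index $[\mathbb{F}_q^\times:\det C_{\GL(2,q)}(A)]$. Two remarks. First, that centralizer-determinant argument is needed not only in the unipotent regime but already to upgrade ``$\GL(2,q)$-conjugate to $a^l$ (resp.\ $b^m$)'' to ``$N$-conjugate'': it succeeds because the split and non-split maximal tori have surjective determinant (and, for the non-split torus, because its normalizer in $N$ contains an inverting element --- whether that element is literally $w$ depends on your chosen model of the torus), but you assert the $N$-conjugacy outright in the semisimple cases while proving it only for $c$. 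Relatedly, $\operatorname{diag}(\nu,1)\,c\,\operatorname{diag}(\nu,1)^{-1}=\left(\begin{smallmatrix}1&0\\ \nu^{-1}&1\end{smallmatrix}\right)$, which is $N$-conjugate to $d$ (the relevant invariant being the square class of the lower-left entry) but not equal to it; conjugating by $\operatorname{diag}(1,\nu)$ gives $d$ on the nose. Second, your sanity check is sharper than you let on: for $q$ odd one has $a^{(q-1)/2}=z$, so the range $1\le l\le (q-1)/2$ printed in the lemma lists the central class $(z)$ twice and yields $q+5$ classes rather than the correct total $q+4$; carrying out your count honestly gives $1\le l\le (q-3)/2$. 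The discrepancy is with the statement as transcribed, not with your method.
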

For the next three lemmas, assume $q$ is a power of $2$. Let us recall the complex character table of $N$.
\begin{lemma}\label{lemma:characterTable}
The following table is the character table of $N$.
\begin{center}
\begin{tabular}{l|l*{2}{c}r}
            &  $\mathbf{1}$&  $c$ & $a^l$  & $b^m$ \\
\hline
$\mathds{1}$ & $1$ & $1$ & $1$ & $1$  \\
$\psi$   & $q$ & $0$ & $1$ & $-1 $ \\
$\chi_i$   & $q+1$ & $1$  & $\nu_{q-1}^{il}$ &$ 0$  \\
$\theta_j$    & $q-1$ & $-1$  &$ 0$ & $-\nu_{q+1}^{jm}$ 
\end{tabular}
\end{center}
where $1\leq l\leq (q-2)/2$, $1\leq m\leq q/2$ and $\nu_r^s=\zeta_r^s+\zeta_r^{-s}$ where $\zeta_r=\exp{(2\pi i/r)}$.
\end{lemma}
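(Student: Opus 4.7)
My plan is to exhibit all $q+1$ irreducible characters of $N = \SL(2, 2^k)$ by constructing representatives of the four natural families and reading off their values on the class representatives from Lemma \ref{lemma:conjugacyClassesSL2q}. I would first record that $|C_N(\mathbf{1})| = q(q^2-1)$, $|C_N(c)| = q$, $|C_N(a^l)| = q-1$, $|C_N(b^m)| = q+1$, and that the total count of conjugacy classes is $1 + 1 + (q-2)/2 + q/2 = q+1$, which is therefore the target number of irreducible characters.

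The trivial row $\mathds{1}$ is immediate. For $\psi$ I would use the natural action of $N$ on $\mathbb{P}^1(\mathbb{F}_q)$: counting fixed points of $\mathbf{1}, c, a^l, b^m$ yields $q+1, 1, 2, 0$, so the class function $\mathrm{perm} - \mathds{1}$ has inner product $1$ with itself and is the desired irreducible of degree $q$ with the values listed in the $\psi$ row.

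For the principal series $\chi_i$ I would take the Borel subgroup $B$ containing $T = \langle a\rangle \cong \mathbb{F}_q^\times$. For each non-trivial character $\alpha$ of $T$ I inflate to $B$ and apply $\Ind_B^N$; because $q - 1$ is odd one has $\alpha \neq \alpha^{-1}$ automatically, and the Bruhat decomposition $N = B \sqcup BwB$ together with a standard Mackey computation shows that the induction is irreducible and that $\alpha, \alpha^{-1}$ yield isomorphic representations, producing exactly $(q-2)/2$ irreducibles of degree $q+1$. Theorem \ref{theorem:inducedCharacterFormula}, combined with the observations that $(b^m) \cap B = \emptyset$ (all eigenvalues of elements of $B$ lie in $\mathbb{F}_q$) and that $(a^l)_N \cap B$ splits into the two $B$-classes represented by $a^{\pm l}$, then yields the values $1$, $\nu_{q-1}^{il}$, $0$ on the classes of $c$, $a^l$, $b^m$ respectively, matching the $\chi_i$ row.

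The hard part is the cuspidal row $\theta_j$, since these representations do not arise by parabolic induction. The cleanest conceptual route is Deligne--Lusztig induction from non-trivial characters of the non-split torus $T' = \langle b\rangle$ of order $q+1$, taken modulo the Weyl involution $\beta \leftrightarrow \beta^{-1}$, producing $q/2$ irreducibles of degree $q-1$ with the prescribed values $-\nu_{q+1}^{jm}$ on $(b^m)$ and zero on $(a^l)$; completeness then follows from $\sum_{\chi} \chi(1)^2 = |N|$ together with column orthogonality. Alternatively, as in \cite{Braun2016}, the cuspidals can be constructed from a Weil-type representation on $\mathbb{F}_{q^2}$; either way, combining with the three families above yields the full character table.
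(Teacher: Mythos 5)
Your plan is essentially correct, but it does something the paper does not: the paper offers no proof of this lemma at all, simply ``recalling'' the table as a known fact (the surrounding lemmas cite \cite{Braun2016} for the conjugacy class data, and the character table is implicitly drawn from the same standard source). Your construction is the classical one and checks out for $q=2^k$: the class count $1+1+(q-2)/2+q/2=q+1$ is right; the $2$-transitivity of $N$ on $\mathbb{P}^1(\mathbb{F}_q)$ with fixed-point counts $q+1,1,2,0$ gives $\psi$; and since $q-1$ is odd, every nontrivial character of the split torus satisfies $\alpha\neq\alpha^{-1}$, so all principal series inductions are irreducible and pair up as $\{\alpha,\alpha^{-1}\}$, giving exactly $(q-2)/2$ characters of degree $q+1$ whose values you compute correctly from Theorem \ref{theorem:inducedCharacterFormula} (the identification of $(c)_N\cap B$ as a single $B$-class uses that squaring is a bijection on $\mathbb{F}_q^{\times}$ for $q$ even, which is worth saying explicitly). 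The one visible gap is in the cuspidal row: you specify the values of $\theta_j$ on $\mathbf{1}$, $a^l$ and $b^m$ but never address the entry $\theta_j(c)=-1$; it does follow (e.g.\ from the Deligne--Lusztig character formula, or from column orthogonality of the $c$-column against the $\mathbf{1}$-column combined with $\sum_\chi|\chi(c)|^2=|C_N(c)|=q$), but as written your plan leaves that entry unverified. The trade-off is clear: the paper's citation is shorter and sufficient for its purposes, while your argument is self-contained modulo the cuspidal construction, which is genuinely the only nontrivial input and is reasonably delegated to \cite{Braun2016} or to Deligne--Lusztig theory.
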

\begin{lemma}\label{lemma:permutationMatrix}
The conjugacy classes of $N$ constitute its real conjugacy classes.
\end{lemma}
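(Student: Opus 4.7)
The plan is to deduce the lemma from the character table of Lemma \ref{lemma:characterTable} via the standard criterion that an element $g$ of a finite group is conjugate to $g^{-1}$ if and only if $\chi(g)\in\mathbb{R}$ for every irreducible complex character $\chi$. This criterion rests on the identity $\chi(g^{-1})=\overline{\chi(g)}$ (essentially the content of Lemma \ref{lemma:trace} applied to a representation affording $\chi$), combined with the fact that two elements are conjugate if and only if every irreducible character agrees on them.

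Granting this, the task reduces to checking that every entry of the character table listed in Lemma \ref{lemma:characterTable} is real. I would proceed column by column: the columns for $\mathbf{1}$ and $c$ contain only integers; the column for $a^l$ consists of integers together with values of the form $\nu_{q-1}^{il}=\zeta_{q-1}^{il}+\zeta_{q-1}^{-il}$, each a sum of a root of unity and its complex conjugate and hence real; the column for $b^m$ is handled identically via $-\nu_{q+1}^{jm}$. Once all four columns are confirmed to be real, the criterion immediately yields that each conjugacy class representative from Lemma \ref{lemma:conjugacyClassesSL2q} is conjugate to its inverse, which is the claim.

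I do not expect a serious obstacle here; the only subtlety is that the argument leans on the character table of Lemma \ref{lemma:characterTable} having already been established. As a sanity check, and perhaps as motivation for the label of the lemma, one can give a direct construction of conjugating elements in $N$: the identity is trivial, $c^{-1}=c$ because $-1=1$ in characteristic $2$; the permutation matrix $\begin{pmatrix}0&1\\1&0\end{pmatrix}$ has determinant $1$ in characteristic $2$ and conjugates $a^l$ to $a^{-l}$; and for $b^m$, realizing the non-split torus as the norm-one subgroup of $\mathbb{F}_{q^2}^{\times}$ acting on $\mathbb{F}_{q^2}\cong\mathbb{F}_q^2$, the Frobenius $x\mapsto x^q$ is an $\mathbb{F}_q$-linear map of $\mathbb{F}_{q^2}$ of determinant $-1=1$, hence lies in $\SL(2,q)$, and it conjugates multiplication by $\beta$ to multiplication by $\beta^q=\beta^{-1}$.
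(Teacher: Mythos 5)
Your proof is correct and takes essentially the same route as the paper: both reduce the claim to the observation that the character table in Lemma \ref{lemma:characterTable} is entirely real-valued (the paper phrases the criterion via the permutation matrix $P$ with $\overline{X}=PX$ from \cite[23.1 Theorem]{Liebeck2001}, you phrase it via the equivalent per-element criterion that $g\sim g^{-1}$ iff all irreducible characters are real at $g$). Your supplementary direct construction of conjugating elements in characteristic $2$ is also correct and gives a character-free verification, but it is not needed for, nor present in, the paper's argument.
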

\begin{proof}
Let $X$ be the complex character table of $N$ and $\overline{X}$ be the matrix conjugate to $X$. It follows by \cite[23.1 Theorem]{Liebeck2001} that there exists a parmutation matrix $P$ such that $\overline{X}=PX$ and the trace of $P$ is equal to the number of conjugacy classes of $N$ which elements are conjugate to their own inverses. However, by Lemma \ref{lemma:characterTable}, we conclude that $X$ has real-vealued entries and thus $P$ is the indentity matrix. Hence, the conjugacy classes of $N$ consitute its real conjugacy classes.
\end{proof}
\begin{lemma}\label{lemma:sl22nproof}
For any $n\in N$, we have $(n)_G=(n)_N$. Thus, $\Ind_{\mathbb{C}}$ and $\Ind_{\mathbb{R}}$ are monomorphisms.
\end{lemma}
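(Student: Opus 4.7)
The approach I would take is to reduce the equality $(n)_G = (n)_N$ to a statement about centralizers. Since $N$ is normal in $G$, conjugation by any $g\in G$ preserves $N$, so $(n)_G \subseteq N$ for every $n \in N$. The restricted $N$-action on $(n)_G$ partitions it into $N$-orbits, which are precisely the $N$-conjugacy classes contained in $(n)_G$, and a standard double-coset computation shows that their number equals the index $[G : N \cdot C_G(n)]$. Because the determinant induces an isomorphism $G/N \cong \mathbb{F}_q^{\times}$, the equality $N \cdot C_G(n) = G$ is equivalent to surjectivity of the restricted map $\det : C_G(n) \to \mathbb{F}_q^{\times}$.

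By Lemma \ref{lemma:conjugacyClassesSL2q}, it is therefore enough to verify this surjectivity for the four representatives $\mathbf{1}$, $c$, $a^l$ and $b^m$. The identity case is trivial. For $a^l$, which is a regular semisimple split element, $C_G(a^l)$ already contains the full diagonal torus $\{\mathrm{diag}(\alpha,\beta) : \alpha,\beta\in\mathbb{F}_q^{\times}\}$, whose determinant is visibly surjective. For $c$, an elementary matrix computation shows that $C_G(c)$ consists of all lower-triangular matrices with equal diagonal entries; their determinants are the squares $a^2$ with $a\in\mathbb{F}_q^{\times}$, and since $q=2^k$, the Frobenius $a\mapsto a^2$ is a bijection on $\mathbb{F}_q$, so these squares exhaust $\mathbb{F}_q^{\times}$.

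The main obstacle is the anisotropic case $b^m$, where $b$ has order $q+1$ and hence does not lie in any split torus of $G$. Here I would realize $\langle b\rangle$ inside the image of the standard embedding $\mathbb{F}_{q^2}^{\times}\hookrightarrow\GL(2,q)$ obtained by letting $\mathbb{F}_{q^2}^{\times}$ act on $\mathbb{F}_{q^2}$ viewed as a two-dimensional $\mathbb{F}_q$-vector space. In characteristic $2$ the only scalar matrix belonging to $N$ is the identity, so $b^m$ is non-central in $G$ for $1\leq m\leq q/2$, and therefore $C_G(b^m)$ is exactly this non-split torus $\cong \mathbb{F}_{q^2}^{\times}$. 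Under the above identification $\det$ corresponds to the field norm $x\mapsto x^{q+1}$; its kernel has order $q+1$, so its image has order $q-1=|\mathbb{F}_q^{\times}|$, proving surjectivity.

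With $(n)_G = (n)_N$ established for every $n \in N$, Corollary \ref{corollary:inducedNormal}(1) immediately yields that $\Ind_{\mathbb{C}}$ is a monomorphism. For the real version, observe that both $n$ and $n^{-1}$ lie in $N$, so $(n)_G^{\pm}\subseteq N$, whence $(n)_G^{\pm}\cap N=(n)_G\cup(n^{-1})_G=(n)_N\cup(n^{-1})_N=(n)_N^{\pm}$; Corollary \ref{corollary:inducedNormal}(2) then gives that $\Ind_{\mathbb{R}}$ is a monomorphism as well. Lemma \ref{lemma:permutationMatrix} provides the additional information that these sets in fact all collapse to $(n)_N$, but this is not needed for the conclusion.
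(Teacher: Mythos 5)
Your proof is correct, and while it rests on the same underlying fact as the paper's (orbit--stabilizer applied to the $N$-orbits inside a $G$-class), the execution is genuinely different in a way worth recording. The paper verifies, for each representative from Lemma \ref{lemma:conjugacyClassesSL2q}, the numerical identity $|C_G(n)|/|C_N(n)|=|G|/|N|$: the classes of $\mathbf{1}$ and $c$ are handled by a uniqueness-of-order argument, the split case $a^l$ by computing both centralizers explicitly, and the anisotropic case $b^m$ by citing the literature for the two centralizer orders. You instead package the criterion as $N\cdot C_G(n)=G$, i.e.\ surjectivity of $\det$ on $C_G(n)$, which exploits $G/N\cong\mathbb{F}_q^{\times}$ and removes any need to compute $C_N(n)$ separately. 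This buys you two things: the case of $c$ becomes transparent (the determinants on $C_G(c)$ are exactly the squares, and the Frobenius is bijective in characteristic $2$ --- which also makes visible why the lemma fails for odd $q$, consistent with Theorem \ref{theorem:sl2qgl2q}), and the case of $b^m$ becomes self-contained via the norm map $\mathbb{F}_{q^2}^{\times}\to\mathbb{F}_q^{\times}$ rather than a citation. One small point to tighten: for $b^m$ you justify $C_G(b^m)$ being the full non-split torus by noting $b^m$ is non-central; you should add that $b^m$ is semisimple (its order divides the odd number $q+1$, hence is prime to the characteristic), since non-central alone does not force the centralizer of a unipotent-type element to be a torus --- with semisimplicity, non-scalar implies distinct eigenvalues and the claim follows. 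Your closing deduction of the real case from $(n)_G^{\pm}\cap N=(n)_N^{\pm}$ matches what the paper needs and is fine.
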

\begin{proof}
Take $n\in N$ and consider $(n)_G$. It decomposes as the union $(n)_G=(n_1)\cup...\cup(n_k)_N$. Obviously $|n|=|n_1|=...=|n_k|$. Since $\mathbf{1}$ is the unique element of order $1$ in $N$, it follows that $(\mathbf{1})_G=(\mathbf{1})_N$. Similarly, $(c)_G=(c)_N$, since $(c)_N$ is the unique conjugacy class of $N$ which elements are order $2$.

Note that for $n\in N$ we have $(n)_G=(n)_N$ if and only if
$$
\frac{|C_G(n)|}{|C_N(n)|}=\frac{|G|}{|N|}
$$
Indeed, by the Orbit-Stabilizer Theorem,
$$
|(n)_G|=|(n)_N|\Leftrightarrow\frac{|G|}{|C_G(n)|}=\frac{|N|}{|C_N(n)|}
$$
If $q=2$, then there is nothing to prove since $N=G$. Assume $q\geq 4$. We show that for $n=a^l$, $1\leq l\leq(q-2)/2$ we have $(n)_G=(n)_N$. We have
\begin{align*}
\begin{pmatrix}
a&b\\c&d
\end{pmatrix}n=n\begin{pmatrix}
a&b\\c&d
\end{pmatrix}&\Leftrightarrow\begin{pmatrix}
a&b\\c&d
\end{pmatrix}\begin{pmatrix}
\nu^l&0\\0&\nu^{-l}
\end{pmatrix}=\begin{pmatrix}
\nu^l&0\\0&\nu^{-l}
\end{pmatrix}\begin{pmatrix}
a&b\\c&d
\end{pmatrix}\\
&\Leftrightarrow\begin{pmatrix}
a\nu&b\nu^{-l}\\c\nu&d\nu^{-l}
\end{pmatrix}=\begin{pmatrix}
a\nu&b\nu\\
c\nu^{-l}&d\nu^{-l}
\end{pmatrix}
\end{align*}
Thus $\begin{pmatrix}
a&b\\c&d
\end{pmatrix}\in C_G(n)$ if and only if $b=c=0$ and $a,d$ are non-zero, so $|C_G(n)|=(q-1)^2$, whereas $\begin{pmatrix}
a&b\\c&d
\end{pmatrix}\in C_N(n)$ if and only if $b=c=0$ and $ad=1$, so $|C_N(n)|=q-1$. Thus
$$
\frac{|G|}{|N|}=\frac{(q^2-1)(q^2-q)}{q(q^2-1)}=q-1=\frac{(q-1)^2}{q-1}=\frac{|C_G(n)|}{|C_N(n)|}
$$
and $(n)_G=(n)_N$.

Consider now $n=b^m$, $1\leq m\leq q/2$. Then, we conclude from \cite[28.4 Theorem]{Liebeck2001} that $|C_G(n)|=q^2-1$ and from \cite[2.2.3]{Reeder2008} that $|C_n(n)|=q+1$. Thus
$$
\frac{|C_G(n)|}{|C_N(n)|}=\frac{q^2-1}{q+1}=q-1=\frac{|G|}{|N|}
$$
and $(n)_G=(n)_N$.
\end{proof}
Let us return to the general case $q=p^k$ where $p$ is a prime. The following theorem gives necessary and sufficient conditions for $\Ind_{\mathbb{C}}$ and $\Ind_{\mathbb{R}}$ to be monomorphisms. It is worth noting that these conditions depend only on $q$.
\begin{theorem}\label{theorem:sl2qgl2q}
$\Ind_{\mathbb{C}}$ is a monomorphism if and only if $q$ is a power of $2$, while $\Ind_{\mathbb{R}}$ is a monomorphism if and only if $q$ is a power of $2$ or $q\equiv 3\Mod{4}$.
\end{theorem}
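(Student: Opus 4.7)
The plan is to invoke Corollary \ref{corollary:inducedNormal} (valid since $N$ is normal in $G$) and translate the statement into a comparison of $N$- and $G$-conjugacy (resp.\ real conjugacy) classes inside $N$. The case where $q$ is a power of $2$ is already handled by Lemma \ref{lemma:sl22nproof}, so I would concentrate on $q$ odd throughout, using the class list from Lemma \ref{lemma:conjugacyClassesSL2q}.

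For the complex statement with $q$ odd, I would exhibit an explicit element of $G$ conjugating $c$ to $d$: the diagonal matrix $\begin{pmatrix}1&0\\0&\nu\end{pmatrix}\in G$ does the job by a direct calculation. Since $(c)_N\neq(d)_N$ by Lemma \ref{lemma:conjugacyClassesSL2q}, the two unipotent $N$-classes fuse inside $G$, so $(c)_G\cap N\supsetneq(c)_N$, and Corollary \ref{corollary:inducedNormal} forces $\Ind_{\mathbb{C}}$ not to be a monomorphism for $q$ odd.

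For the real statement with $q$ odd, the crux is to decide when $c^{-1}$ belongs to $(c)_N$ versus $(d)_N$. Parametrising a general $N$-conjugate of $c$ and setting it equal to $c^{-1}=\begin{pmatrix}1&0\\-1&1\end{pmatrix}$ forces the upper-right parameter to vanish and reduces the equation to $\delta^2=-1$ for some $\delta\in\mathbb{F}_q^{\times}$. Thus $c^{-1}\in(c)_N$ iff $-1$ is a square in $\mathbb{F}_q$, i.e.\ iff $q\equiv 1\Mod{4}$; the analogous calculation (with $\nu\delta^2$ in place of $\delta^2$) gives $c^{-1}\in(d)_N$ iff $q\equiv 3\Mod{4}$. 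Since $c,d,c^{-1},d^{-1}$ all lie in the unique nontrivial unipotent class of $G$, one has unconditionally $(c)_G^{\pm}\cap N=(c)_N\cup(d)_N$; hence $(c)_N^{\pm}=(c)_G^{\pm}\cap N$ holds iff $q\equiv 3\Mod{4}$, and $\Ind_{\mathbb{R}}$ is not a monomorphism when $q\equiv 1\Mod{4}$.

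It remains, for $q\equiv 3\Mod{4}$, to verify the matching $(x)_N^{\pm}=(x)_G^{\pm}\cap N$ for every remaining class. The pair $(zc),(zd)$ inherits the same analysis after multiplying by the central element $z$. For the semisimple classes $(a^l)$ and $(b^m)$, the centralizer index computations carried out in the proof of Lemma \ref{lemma:sl22nproof} carry over verbatim to odd $q$ and give $(a^l)_G=(a^l)_N$ and $(b^m)_G=(b^m)_N$, while the Weyl-type involution $\begin{pmatrix}0&1\\-1&0\end{pmatrix}\in N$ and its non-split analogue in the normalizer of the non-split torus conjugate $a^l$ and $b^m$ to their inverses. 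The main obstacle is precisely the dichotomy for $c^{-1}$ described above, which fixes which of $q\equiv 1$ or $q\equiv 3\Mod{4}$ produces a match on the unipotent side; once that point is pinned down, the remaining class-by-class bookkeeping is routine.
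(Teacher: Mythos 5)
Your proposal is correct and follows essentially the same route as the paper: reduce via Corollary~\ref{corollary:inducedNormal} to comparing the fusion of $N$-classes in $G$, settle the complex case for odd $q$ by exhibiting an element of $G$ fusing $(c)_N$ and $(d)_N$, settle the real case by deciding whether $c^{-1}$ lies in $(c)_N$ or $(d)_N$ according to $q\Mod{4}$, and then check the remaining classes $(z)$, $(zc)$, $(zd)$, $(a^l)$, $(b^m)$ by the same centralizer-order computations as in Lemma~\ref{lemma:sl22nproof}. The only difference is that you derive the quadratic-residue dichotomy for $c^{-1}$ (that $c^{-1}\in(c)_N$ iff $-1$ is a square in $\mathbb{F}_q$) by a direct computation, whereas the paper cites an external lemma for this point, so your version is slightly more self-contained.
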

\begin{proof}
The part involving $q=2^k$ follows from Lemma \ref{lemma:sl22nproof}. Thus, we can assume $q$ is odd and we have to show the following two statements.
\begin{enumerate}[(1)]
    \item $\Ind_{\mathbb{C}}$ is not a monomorphism.
    \item $\Ind_{\mathbb{R}}$ is a monomorphism if and only if $q\equiv 3\Mod{4}$.
\end{enumerate}
The elements $c$ and $d$ are not conjugate in $N$, they are conjugate in $G$, however, since
$$
\begin{pmatrix}
a&0\\c&a\nu
\end{pmatrix}\begin{pmatrix}
1&0\\1&1
\end{pmatrix}=\begin{pmatrix}
1&0\\\nu&1
\end{pmatrix}\begin{pmatrix}
a&0\\c&a\nu
\end{pmatrix}
$$ for any $a\neq 0$. Thus $(c)_N\subsetneq(c)_G=(d)_G$ and $(1)$ follows by Corollary \ref{corollary:inducedNormal}.

Now, take $q\equiv 1\Mod{4}$. It follows from \cite[Lemma 2]{Mizerka2019_2} that $(c)_N^{\pm}=(c)_N\subsetneq(c)_G\subseteq(c)_G^{\pm}$ and $(2)$ follows by Theorem \ref{theorem:A}. Suppose $q\equiv 3\Mod{4}$. Obviously $(\mathbf{1})_G=(\mathbf{1})_N$ and $(z)_G=(z)_N$ (these are conjugacy classes containing only one element). Note that $c$ and $d$ are the only elements of $N$ of order $q$. Thus, we conclude from \cite[Lemma 2]{Mizerka2019_2} that 
$$
(d)_G^{\pm}=(c)_G^{\pm}=(c)_G=(d)_G=(c)_N\cup(d)_N=(c)_N\cup(c^{-1})_N=(c)_N^{\pm}=(d)_N^{\pm}
$$
Analogously, we prove that $(zc)_G^{\pm}=(zd)_G^{\pm}=(zc)_N^{\pm}=(zd)_N^{\pm}$. We can repeat the reasoning from the proof of Lemma \ref{lemma:sl22nproof} to show that $(a^l)_G=(a^l)_N$ for $1\leq(q-3)/2$ which yields $(a^l)_G^{\pm}=(a^l)_N^{\pm}$. Finally, take $n=b^m$ for an arbitrary $1\leq m\leq q/2$. It follows by \cite[2.1.3]{Reeder2008} that $|C_N(n)|=q+1$ and by \cite[28.4 Theorem]{Liebeck2001} that $|C_G(n)|=q^2-1$. Thus
$$
\frac{|C_G(n)|}{|C_N(n)|}=q-1=\frac{|G|}{|N|}
$$
and therefore $(n)_G=(n)_N$, so $(n)_G^{\pm}=(n)_N^{\pm}$. Thus,we have proved that for any $n\in N$ the real conjugacy classes of $n$ in $N$ and $G$ coincide which concludes the proof by Corollary \ref{corollary:inducedNormal}.
\end{proof}
\section{Existence of Smith exotic actions}\label{section:conditionsSufficiente}
Assume $G$ is a group. Let us introduce the following definition after \cite{Morimoto1998}.
\begin{definition}\label{definition:gapCondition}
An $\mathbb{R}G$-module $V$ is said to satisfy the $(a)$ \textbf{strong} or $(b)$ \textbf{weak} \textbf{gap condition} if for any $P<H\leq G$ we have $\dim V^P>2\dim V^H$ for $(a)$ and $\dim V^P\geq 2\dim V^H$ for $(b)$.
\end{definition}
Throughout this paragraph, we assume that $G$ is an Oliver group with a normal subgroup $N$ such that $\Ind_N^G:\RO(N)\rightarrow\RO(G)$ is a monomorphism. We prove here Theorem \ref{theorem:B}.
\subsection{Sufficient conditions for an exotic action}
\begin{definition}\label{definition:smithMatched}
We call two $\mathbb{R}G$-modules \textbf{Smith matched} if the following conditions are satisfied.
\begin{enumerate}[(1)]
\item $U-V\in\widetilde{\PO}(G)$,
    \item $U$ and $V$ satisfy the weak gap condition,
    \item $\dim W^P\geq 5$, $\dim W^H\geq 2$ for $P$ a $p$-subgroup of $G$ and $H\in\mathcal{PC}(G)$, $W=U,V$,
    \item every pseudocyclic subgroup of $G$ occurs as an isotopy subgroup of the actions $G\acts U$ and $G\acts V$,
    \item $U^L=V^L=\{0\}$ for any $L\in\mathcal{L}(G)$,
    \item $\dim U=\dim V\geq 6$.
\end{enumerate}
\end{definition}

From the proof of \cite[Theorem 0.4]{Oliver1996} it follows that if two $\mathbb{R}G$-modules $U$ and $V$ are Smith matched and $\mathcal{P}$-oriented, then there exists a two fixed point action of $G$ on a disk $D$ with tangent spaces at fixed points isomorphic at these points to $U$ and $V$. This action can be constructed to satisfy the following conditions.
\begin{enumerate}[(1')]
\item $D^P$ is simply connected for any $P\in\mathcal{P}(G)$,
\item $D$ satisfies the weak gap manifold condition,
  \item $\dim D^P\geq 5$ and $\dim D^H\geq 2$ for any $P\in\mathcal{P}(G)$ and $H\in\mathcal{PC}(G)$ ,
\item any pseudocyclic subgroup of $G$ occurs as an isotropy subgroup of the action of $G$ on $D$,
       \item $D^L$ is finite for any $L\in\mathcal{L}(G)$,
       \item $\dim D\geq 6$,
          \item $T_{y_0}D$ is $\mathcal{P}$-oriented for some $y_0\in D^G$, 
\end{enumerate}
where by the \emph{weak gap manifold condition} for a $G$-manifold $M$, we mean the following, \cite{Morimoto1998}.
\begin{itemize}[-]
    \item $\dim M^P\geq 2\dim M^H$
    \item if $\dim M^P=2\dim M^H$, then $M^H$ is connected, oriented, and the transformation $\theta_g:M^H\rightarrow M^H$ is orientation preserving for any element $g\in N_G(H)$, and $\dim M^H>\dim M^K+1$ whenever $H<K\leq G$
    \item if $\dim M^P-2\dim M^H=\dim M^{P'}-2\dim M^{H'}=0$, then the subgroup generated by subgroups $H$ and $H'$ is not a large subgroup of $G$,
\end{itemize}
where, for $H\leq G$, the expression $\dim M^H$ denotes the maximum dimension of connected components of $M^H$.

Taking the double of the disk $D$ allows us to construct a smooth action on a sphere $S$ with four fixed points and tangent spaces at these points isomorphic to two copies of $U$ and two copies of $V$ and the conditions $(1')-(7')$ (the assumption on simply connectedness still holds) are satisfied for this action. The next step is to make use of the Deleting-Inserting Theorem of Morimoto, \cite[Theorem 0.1]{Morimoto1998}, which allows to delete two fixed points with tangent spaces at these points one of the copies of $U$ and $V$. Thus, once we have two Smith matched and $\mathcal{P}$-oriented $\mathbb{R}G$-modules $U$ and $V$, then we can construct a smooth action on a sphere with two fixed points and tangent spaces at these points isomorphic to $U$ and $V$. 
\subsection{Ensuring that the induced modules are Smith equivalent}
\begin{lemma}\label{dimensionFormula}
If $K\leq G$ and $V$ is an $\mathbb{R}N$-module, then $\dim(\Ind_N^G(V)^K)=\frac{|G|}{|N|}\cdot\frac{|K\cap N|}{|K|}\dim V^{K\cap N}$.
\end{lemma}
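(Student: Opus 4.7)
The plan is to compute $\dim(\Ind_N^G V)^K$ character-theoretically. Writing $\chi = \chi_{\Ind_N^G V}$ for the (real-valued) character of the induced module and applying the standard identity $\dim W^K = \frac{1}{|K|}\sum_{k\in K}\chi_W(k)$, it suffices to evaluate $\chi$ on each $k\in K$ and then sum.

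For the evaluation, I would invoke the induced character formula (Theorem \ref{theorem:inducedCharacterFormula}) in its ``sum over $x \in G$ with $x^{-1}gx \in N$'' form. Normality of $N$ immediately forces $\chi(k) = 0$ whenever $k \in K \setminus N$ (since then $x^{-1}kx \notin N$ for every $x$), while for $k\in K \cap N$ the summation ranges over the entire group $G$, giving
\[
\chi(k) = \frac{1}{|N|}\sum_{x\in G}\chi_V(x^{-1}kx).
\]

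The crux is to show that this sum collapses to $\frac{|G|}{|N|}\chi_V(k)$, or equivalently, that $\chi_V$ is constant on the full $G$-conjugacy class $(k)_G$ for every $k \in N$. Here the standing assumption that $\Ind_N^G:\RO(N)\to\RO(G)$ is a monomorphism becomes indispensable: by Corollary \ref{corollary:inducedNormal}(2), this hypothesis forces $(k)_G^{\pm}\cap N = (k)_N^{\pm}$ for each $k\in N$, so $(k)_G \subseteq (k)_N \cup (k^{-1})_N$. Since $V$ is a \emph{real} $N$-module, $\chi_V$ is real-valued (hence $\chi_V(k^{-1}) = \chi_V(k)$) and is a class function on $N$, and these two facts together make it constant on $(k)_G$. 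A standard orbit-stabilizer count $|(k)_G|\cdot|C_G(k)| = |G|$ then delivers the collapse.

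With $\chi(k) = \frac{|G|}{|N|}\chi_V(k)$ on $K \cap N$ and $\chi(k) = 0$ elsewhere on $K$, assembling the dimension formula $\dim(\Ind_N^G V)^K = \frac{1}{|K|}\sum_{k \in K}\chi(k)$ and recognising $\frac{1}{|K \cap N|}\sum_{k\in K\cap N}\chi_V(k) = \dim V^{K \cap N}$ yields the claimed identity. The one genuine obstacle is the middle step: without the monomorphism hypothesis one could pick $k\in N$ whose $G$-class splits non-trivially inside $N$, and $\chi_V$ may take different values on the pieces, which would break the collapse of the sum and invalidate the formula.
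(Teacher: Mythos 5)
Your proposal is correct and follows essentially the same route as the paper: both reduce to evaluating the induced character on $K$, show it vanishes off $N$ and equals $\frac{|G|}{|N|}\chi_V$ on $K\cap N$ by combining the monomorphism hypothesis (which forces $(k)_G\subseteq (k)_N\cup(k^{-1})_N$) with the real-valuedness of $\chi_V$ (so $\chi_V(k^{-1})=\chi_V(k)$), and then sum over $K\cap N$. The only cosmetic difference is that you use the $\frac{1}{|N|}\sum_{x\in G}$ form of the Frobenius induction formula where the paper uses the centralizer/class-representative form of Theorem \ref{theorem:inducedCharacterFormula}; these are equivalent and the key collapse step is identical.
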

\begin{proof}
Put $W=\Ind_N^{G}(V)$. Since $\dim W^K=\frac{1}{|K|}\sum_{k\in K}\chi_W(k)$, we have to compute the induced character, $\chi_W$. Since $N$ is normal in $G$ and $\Ind_N^G:\RO(N)\rightarrow\RO(G)$ is a monomorphism, we know by Theorem \ref{theorem:A} that the real conjugacy classes in $G$ of elements from $N$ coincide with their real conjugacy classes in $N$. Thus, from Theorem \ref{theorem:inducedCharacterFormula}, we conclude that
\begin{equation*}
    \chi_{\raisebox{-.5ex}{$\scriptstyle W$}}(g) = \begin{cases}
               0               & g \not\in N\\
               \frac{|G|}{|N|}\chi_{\raisebox{-.5ex}{$\scriptstyle V$}}(g)               & g\in N
               
           \end{cases}
\end{equation*}
Indeed, if for some $n\in N$ we have $(n)_G=(n)_N$, then the formula above follows directly from the orbit-stabilizer theorem and Theorem \ref{theorem:inducedCharacterFormula}. In case $(n)_N\subsetneq(n)_G$, it follows that $(n)^{\pm}_G=(n)_G=(n^{-1})_G=(n)\cup(n^{-1})_N$, since $(n^{\pm})_G=(n^{\pm})_N$.
Thus, by the orbit-stabilizer theorem and Lemma \ref{lemma:trace},
$$
\chi_{\raisebox{-.5ex}{$\scriptstyle W$}}(n)=|C_G(n)|\Big(\frac{\chi_{\raisebox{-.5ex}{$\scriptstyle V$}}(n)}{|C_N(n)|}+\frac{\chi_{\raisebox{-.5ex}{$\scriptstyle V$}}(n^{-1})}{|C_N(n^{-1})|}\Big)=\frac{|G|}{|N|}\chi_{\raisebox{-.5ex}{$\scriptstyle V$}}(n)
$$
Hence
\begin{align*}
\dim W^K&=\frac{1}{|K|}\sum_{k\in K\cap N}\chi_{\raisebox{-.5ex}{$\scriptstyle W$}}(k)=\frac{1}{|K|}\cdot\frac{|G|}{|N|}\sum_{k\in K\cap N}\chi_{\raisebox{-.5ex}{$\scriptstyle V$}}(k)\\
&=\frac{|G|}{|N|}\cdot\frac{|K\cap N|}{|K|}\dim V^{K\cap N}.
\end{align*}
\end{proof}
\begin{theorem}\label{theorem1}
If an $\mathbb{R}N$-module $V$ satisfies the conditions $(2)-(4)$, then $W=\Ind_N^{G}(V)$ satisfies these conditions as well as an $\mathbb{R}G$-module.
\end{theorem}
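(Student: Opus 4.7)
The plan is to verify each of conditions (2)--(4) for $W=\Ind_N^G(V)$ by reducing to the corresponding condition for $V$ via Lemma \ref{dimensionFormula}. That formula rewrites, for any $K\leq G$,
\[
\dim W^K=\frac{[G:N]}{[K:K\cap N]}\dim V^{K\cap N},
\]
where the coefficient $[G:N]/[K:K\cap N]=[G:KN]$ is a positive integer.

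For condition (2), given $P<H\leq G$, I would split into two cases according to whether $P\cap N=H\cap N$ or $P\cap N<H\cap N$. In the latter case, the weak gap condition on $V$ gives $\dim V^{P\cap N}\geq 2\dim V^{H\cap N}$, while the natural injection $P/(P\cap N)\hookrightarrow H/(H\cap N)$ (valid because $P\leq H$) yields $[H:H\cap N]\geq[P:P\cap N]$; combining these produces $\dim W^P\geq 2\dim W^H$. In the former case the dimension factors cancel, leaving the inequality $[H:H\cap N]\geq 2[P:P\cap N]$, which simplifies to $|H|\geq 2|P|$ and follows from Lagrange's theorem since $P<H$.

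For condition (3), I would first observe that $P\in\mathcal{P}(G)$ implies $P\cap N\in\mathcal{P}(N)$, and that $H\in\mathcal{PC}(G)$ with normal $p$-subgroup $Q$ implies $H\cap N\in\mathcal{PC}(N)$ via the normal $p$-subgroup $Q\cap N$ and the cyclic subquotient $(H\cap N)/(Q\cap N)\hookrightarrow H/Q$. Since the integer coefficient $[G:KN]$ is at least $1$, applying condition (3) for $V$ yields $\dim W^P\geq\dim V^{P\cap N}\geq 5$ and $\dim W^H\geq\dim V^{H\cap N}\geq 2$.

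The main obstacle is condition (4), which concerns prescribed isotropy rather than dimensions. For a given $H\in\mathcal{PC}(G)$, condition (4) on $V$ furnishes $v\in V$ with $N$-stabilizer $N_v=H\cap N$, and I would consider
\[
w=\sum_{i=1}^{k}h_i\otimes v\in W,
\]
where $h_1,\dots,h_k$ are representatives of the cosets of $H\cap N$ in $H$; the sum is independent of the choice of representatives because $H\cap N\subseteq N_v$, and a direct computation shows $H\subseteq G_w$. For the reverse inclusion, the cosets $h_iN$ are distinct in $G/N$, so any $g\in G_w$ must permute them, forcing $g\in HN$; writing $g=hn$ with $h\in H$ and $n\in N$, the stabilizer condition reduces to an $N$-component analysis yielding $n\in\bigcap_i h_i(H\cap N)h_i^{-1}\subseteq H\cap N$, so that $|G_w|=|H\cap N|\cdot[H:H\cap N]=|H|$ and $G_w=H$.
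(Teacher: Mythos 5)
Your proposal is correct, and for conditions (2) and (3) it follows essentially the paper's route: everything is reduced to Lemma \ref{dimensionFormula} and the comparison of coefficients is done by a coset injection. Your observation that the coefficient $[G:N]/[K:K\cap N]$ is exactly the integer $[G:KN]$ (so that it is automatically $\geq 1$ and weakly decreasing as $K$ grows, since $PN\leq HN$) is a small but genuine streamlining of the paper's argument; the paper also misquotes its own hypothesis at one point (citing condition ``(1)'' and dropping the factor $2$), which your case analysis states correctly. The real divergence is condition (4). The paper argues indirectly: after checking that $K\cap N\in\mathcal{PC}(N)$, it proves $\dim W^{K'}<\dim W^{K}$ for every $K'$ strictly containing the pseudocyclic $K$ (strictness coming from condition (4) for $V$ when $K'\cap N>K\cap N$, and from the index otherwise), and then relies implicitly on the fact that $W^K$ cannot be covered by the finitely many proper subspaces $W^{K'}$ to produce a point with isotropy exactly $K$. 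You instead exhibit such a point explicitly as the orbit sum $w=\sum_i h_i\otimes v$ over coset representatives of $H\cap N$ in $H$, and compute $G_w=H$ directly via the support argument $G_w\subseteq HN$ and the normality of $H\cap N$ in $H$ (which gives $\bigcap_i h_i(H\cap N)h_i^{-1}=H\cap N$). Your version is constructive, makes the use of condition (4) for $V$ completely explicit, and avoids the unstated covering step; the paper's is shorter and fits the same dimension-formula framework as parts (2) and (3). Both arguments require, and both supply, the fact that $H\cap N$ is pseudocyclic in $N$ (you establish it while proving (3), the paper while proving (4)).
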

\begin{proof}
It is obvious that the induced module of a real module is real as well. For the proof of $(2)$, take $P<K\leq G$, $P$ - a $p$-group for some prime $p$. We must show $\dim W^P\geq 2\dim W^K$. Using Lemma \ref{dimensionFormula}, this is equivalent to
\begin{equation}\label{eq:1}
\frac{|P\cap N|}{|P|}\dim V^{P\cap N}\geq 2\frac{|K\cap N|}{|K|}\dim V^{K\cap N}
\end{equation}
We show that $\frac{|P\cap N|}{|P|}\geq \frac{|K\cap N|}{|K|}$. This is equivalent to $\frac{|K|}{|P|}\geq \frac{|K\cap N|}{|P\cap N|}$. For this, notice that we have an injective coset map
$$
f:(K\cap N)/(P\cap N)\rightarrow K/P
$$
$$
[k]_{P\cap N}\mapsto [k]_P
$$
Indeed, to see that $f$ is well-defined take $k_1,k_2\in K\cap N$ with $k_1k_2^{-1}\in P\cap N$. In particular, $k_1k_2^{-1}\in P$. For injectivity of $f$ assume $f([k_1]_{P\cap N})=f([k_2]_{P\cap N})$, which is equivalent to $k_1k_2^{-1}\in P$. However, since $k_1,k_2\in K\cap N$, this yields $k_1k_2^{-1}\in P\cap N$ and $[k_1]_{P\cap N}=[k_2]_{P\cap N}$. 

Thus, if $P\cap N<K\cap N$, the inequality \ref{eq:1} follows, since $(1)$ is satisfied for $V$ - in particular for $P\cap N$ and $K\cap N$, that is, $\dim V^{P\cap N}\geq\dim V^{K\cap N}$. If $P\cap N=K\cap N$, then, since $P<K$, $|K|\geq 2|P|$ and \ref{eq:1} follows as well.

Now, consider the condition $(3)$. Take $P\leq G$ a $p$-subgroup for some prime $p$ and $K\in \mathcal{PC}(G)$. From Lemma \ref{dimensionFormula}, we have $\dim W^P\geq 5\Leftrightarrow \frac{|G|}{|N|}\cdot\frac{|P\cap N|}{|P|}\dim V^{P\cap N}\geq 5$. Since $(3)$ is satisfied for $V$, $\dim V^{P\cap N}\geq 5$. Hence, we only have to show $\frac{|G|}{|N|}\cdot\frac{|P\cap N|}{|P|}\geq 1$, which is equivalent to $\frac{|G|}{|P|}\geq \frac{|G\cap N|}{|P\cap N|}$ and we can repeat the trick with the coset injection $f$. To prove $\dim W^K\geq 2$, we repeat the same argument.

Concerning $(4)$, let $K\leq G$ be pseudocyclic with $P\trianglelefteq K$ a $p$-group for some prime $p$ such that $K/P$ is cyclic. Note first that in this case, the map $f:(K\cap N)/(P\cap N)\rightarrow K/P$ becomes a group monomorphism. Therefore $(K\cap N)/(P\cap N)$ is also cyclic and thus $K\cap N$ pseudocyclic. Now, pick any $K'$ such that $K<K'\leq G$. We must show $\dim W^{K'}<\dim W^K$, which is equivalent by Lemma \ref{dimensionFormula} to 
$$
\frac{|K'\cap N|}{|K'|}\dim V^{K'\cap N}<\frac{|K\cap N|}{|K|}\dim V^{K\cap N}.
$$ 
As previously, $\frac{|K\cap N|}{|K|}\geq \frac{|K'\cap N|}{|K'|}$, and, since $K\cap N$ is pseudocyclic, the claim follows if $K'\cap N>K\cap N$. Otherwise, $\frac{|K\cap N|}{|K|}$ is strictly greater than $\frac{|K'\cap N|}{|K'|}=\frac{|K\cap N|}{|K'|}$ and the claim follows as well.
\end{proof}
\begin{lemma}\label{lemma:2}
If $K$ is a large subgroup of $G$, then $K\cap N$ is a large subgroup of $N$.
\end{lemma}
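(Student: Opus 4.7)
The plan is to show that the same prime $p$ witnessing largeness of $K$ in $G$ also witnesses largeness of $K \cap N$ in $N$. Concretely, if $\mathcal{O}^p(G) \leq K$, I will prove $\mathcal{O}^p(N) \leq K \cap N$.

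The key observation is the containment $\mathcal{O}^p(N) \leq \mathcal{O}^p(G)$. To see this, I would first note that $\mathcal{O}^p(G) \cap N$ is a normal subgroup of $N$ (as the intersection of a normal subgroup of $G$ with $N$). By the second isomorphism theorem,
\[
N/(\mathcal{O}^p(G) \cap N) \cong N \cdot \mathcal{O}^p(G)/\mathcal{O}^p(G) \leq G/\mathcal{O}^p(G),
\]
and the right-hand side is a $p$-group by definition of $\mathcal{O}^p(G)$. Consequently $\mathcal{O}^p(G) \cap N$ is a normal subgroup of $N$ with $p$-group quotient, so by minimality of $\mathcal{O}^p(N)$ we obtain $\mathcal{O}^p(N) \leq \mathcal{O}^p(G) \cap N \leq \mathcal{O}^p(G)$.

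Combining this with the hypothesis $\mathcal{O}^p(G) \leq K$ gives $\mathcal{O}^p(N) \leq K$, and since $\mathcal{O}^p(N) \leq N$ automatically, we conclude $\mathcal{O}^p(N) \leq K \cap N$, i.e. $K \cap N \in \mathcal{L}(N)$. There is essentially no obstacle here beyond correctly invoking the minimality definition of $\mathcal{O}^p$; the only step requiring even a moment's thought is the verification that $N/(\mathcal{O}^p(G)\cap N)$ embeds into a $p$-group, which is immediate from the second isomorphism theorem.
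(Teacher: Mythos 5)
Your proof is correct and follows essentially the same route as the paper: both arguments set $L=\mathcal{O}^p(G)\cap N$, embed $N/L$ into the $p$-group $G/\mathcal{O}^p(G)$ (you via the second isomorphism theorem, the paper via the induced monomorphism of quotients), and then invoke minimality of $\mathcal{O}^p(N)$ to get $\mathcal{O}^p(N)\leq L\leq K\cap N$. No issues.
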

\begin{proof}
Assume $K$ is a large subgroup of $G$ and $\mathcal{O}^p(G)\leq K$ for some prime $p$. Let $L=\mathcal{O}^p(G)\cap N$. Then $L$ is a normal subgroup of $N$ and $N/L$ is a $p$-group. To see this, note that we have a quotient group monomorphism $N/L\rightarrow G/\mathcal{O}^p(G)$, from which it follows that $\frac{|N|}{|L|}$ divides $\frac{|G|}{|\mathcal{O}^p(G)|}$, which is a power of $p$. Thus, $N/L$ is a $p$-group.

Since $L$ is a normal subgroup of $N$ with $N/L$ a $p$-group, it follows that $\mathcal{O}^p(N)\leq L$. On the other hand, $\mathcal{O}^p(G)\leq K$, which implies $L\leq K\cap N$. Therefore, $\mathcal{O}^p(N)\leq K\cap N$.
\end{proof}
Let us now note the following auxiliary proposition.
\begin{proposition}\label{proposition:1}
Let $G$ be a group and $U$ and $V$ be two $\mathbb{R}G$-modules. Then $U-V\in PO(G)$ if and only if $\chi_{\raisebox{-.5ex}{$\scriptstyle U$}}(p)=\chi_{\raisebox{-.5ex}{$\scriptstyle V$}}(p)$ for any element $p\in G$ of prime power order.
\end{proposition}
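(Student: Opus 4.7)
The plan is to unpack the definition of $\PO(G)$, which is stated in terms of subgroups of prime power order, and relate it to the pointwise condition on characters at elements of prime power order. The bridge between ``subgroups'' and ``elements'' is the obvious pair of observations: every element of prime power order generates a cyclic subgroup of prime power order, and conversely every element in a subgroup of prime power order has, by Lagrange, prime power order.

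For the forward implication, assume $U-V\in\PO(G)$ and pick $p\in G$ of prime power order. Then $\langle p\rangle\leq G$ has prime power order, so by definition $\Res_{\langle p\rangle}^G U\cong\Res_{\langle p\rangle}^G V$ as $\mathbb{R}\langle p\rangle$-modules. Taking characters and evaluating at $p$ yields $\chi_U(p)=\chi_V(p)$.

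For the converse, assume $\chi_U$ and $\chi_V$ agree on every element of prime power order in $G$, and let $P\leq G$ be any subgroup of prime power order. Each element of $P$ has order dividing $|P|$, hence prime power order, so $\chi_U|_P=\chi_V|_P$. Equivalently, $\Res_P^G U$ and $\Res_P^G V$ have equal characters as $\mathbb{R}P$-modules. I would then invoke the standard fact that a finite-dimensional real representation of a finite group is determined up to isomorphism by its character (equivalently, the complexifications are isomorphic as $\mathbb{C}P$-modules by the usual character theory, and descending back to $\mathbb{R}$ is automatic because the Schur indicators of the complex irreducible constituents are intrinsic). This gives $\Res_P^G U\cong\Res_P^G V$, so $U-V\in\PO(G)$.

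There is no substantial obstacle: the proof is essentially a bookkeeping argument matching the two formulations. The only point that deserves a line of justification is the passage from ``characters agree on $P$'' to ``the restricted real modules are isomorphic,'' which one can either cite or briefly justify via complexification and Schur indicators. Everything else is immediate from the definitions.
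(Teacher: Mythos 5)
Your proof is correct and follows essentially the same route as the paper's: both directions reduce to the observation that elements of prime power order generate, and populate, exactly the subgroups of prime power order, and then pass between character equality and module isomorphism. Your extra remark justifying that equal characters determine a real representation up to isomorphism (via complexification and Schur indicators) is a point the paper leaves implicit, but it does not change the argument.
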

\begin{proof}
Assume $U-V\in PO(G)$. By definition, this means that the restrictions, $\Res_P^G(U)$ and $\Res_P^G(V)$, of $U$ and $V$ to any subgroup $P\leq G$ of prime power order are isomorphic as $\mathbb{R}P$-modules. Take any $p\in G$ of prime power order. We must show that $\chi_{\raisebox{-.5ex}{$\scriptstyle U$}}(p)=\chi_{\raisebox{-.5ex}{$\scriptstyle V$}}(p)$. The cyclic group $P=\{1,p,\ldots,p^{|p|-1}\}$ is a subgroup of $G$ of prime power order. Thus, $\Res_P^G(U)\cong\Res_P^G(V)$, which is equivalent to saying that the characters of these restrictions take equal values for every $p\in P$. On the other hand, $\chi_{\raisebox{-.5ex}{$\scriptstyle \Res_P^G(U)$}}(p)=\chi_{\raisebox{-.5ex}{$\scriptstyle U$}}(p)$ and $\chi_{\raisebox{-.5ex}{$\scriptstyle \Res_P^G(V)$}}(p)=\chi_{\raisebox{-.5ex}{$\scriptstyle V$}}(p)$. Hence, $\chi_{\raisebox{-.5ex}{$\scriptstyle U$}}(p)=\chi_{\raisebox{-.5ex}{$\scriptstyle V$}}(p)$. 

Now, assume $\chi_{\raisebox{-.5ex}{$\scriptstyle U$}}(p)=\chi_{\raisebox{-.5ex}{$\scriptstyle V$}}(p)$ for any $p\in G$ of prime power order. Take any $P\leq G$ of prime power order. We must show that $\Res_P^G(U)\cong\Res_P^G(V)$. Note that every element $p\in P$ is of prime power order, which implies that $$\chi_{\raisebox{-.5ex}{$\scriptstyle U$}}(p)=\chi_{\raisebox{-.5ex}{$\scriptstyle V$}}(p)\Leftrightarrow \chi_{\raisebox{-.5ex}{$\scriptstyle \Res_P^G(U)$}}(p)=\chi_{\raisebox{-.5ex}{$\scriptstyle \Res_P^G(V)$}}(p)$$
Therefore $\Res_P^G(U)\cong\Res_P^G(V)$.
\end{proof}
Now, we can prove the following lemma ensuring the condition $(1)$ for the induced modules.
\begin{lemma}\label{lemma:3}
Let $H\leq G$. If $U-V\in\widetilde{PO}(H)$, then $\Ind_H^{G}(U)-\Ind_H^{G}(V)\in\widetilde{PO}(G)$.
\end{lemma}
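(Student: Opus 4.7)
The plan is to verify the two defining conditions of $\widetilde{PO}(G)$ separately: first, that $\Ind_H^G(U)-\Ind_H^G(V)$ lies in $PO(G)$, and second, that the $G$-fixed points of both induced modules vanish.

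For the first condition, I would appeal to Proposition \ref{proposition:1}, which reduces membership in $PO(G)$ to the equality of characters on every element of prime-power order. So fix $g\in G$ of prime power order and apply the Frobenius formula
\[
\chi_{\Ind_H^G(U)}(g)=\frac{1}{|H|}\sum_{\substack{x\in G\\x^{-1}gx\in H}}\chi_U(x^{-1}gx).
\]
The key observation is that each conjugate $x^{-1}gx$ has the same order as $g$, hence is itself of prime power order whenever it lies in $H$. Applying Proposition \ref{proposition:1} to the hypothesis $U-V\in\widetilde{PO}(H)\subseteq PO(H)$ gives $\chi_U(x^{-1}gx)=\chi_V(x^{-1}gx)$ for every such $x$. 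Summing term by term yields $\chi_{\Ind_H^G(U)}(g)=\chi_{\Ind_H^G(V)}(g)$, and another application of Proposition \ref{proposition:1} (this time to $G$) gives $\Ind_H^G(U)-\Ind_H^G(V)\in PO(G)$.

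For the second condition, I would invoke Frobenius reciprocity in the form
\[
\dim \Ind_H^G(U)^G=\langle \Ind_H^G(U),\mathds{1}_G\rangle_G=\langle U,\Res_H^G(\mathds{1}_G)\rangle_H=\langle U,\mathds{1}_H\rangle_H=\dim U^H.
\]
Since $U-V\in\widetilde{PO}(H)$ forces $U^H=V^H=\{0\}$, both $\dim\Ind_H^G(U)^G$ and $\dim\Ind_H^G(V)^G$ vanish. Combined with the first step, this gives $\Ind_H^G(U)-\Ind_H^G(V)\in\widetilde{PO}(G)$.

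I do not expect a substantial obstacle here: the whole argument is a direct application of the Frobenius induced character formula and Frobenius reciprocity, together with the characterisation provided by Proposition \ref{proposition:1}. The only mild subtlety is recognising that $PO$-membership is conjugation-invariant in the sense that the character equality on prime-power elements of $H$ transfers to the conjugates appearing in the Frobenius sum, and that no hypothesis beyond $U-V\in\widetilde{PO}(H)$ (in particular, no assumption about $H$ being normal or about $\Ind_H^G$ being a monomorphism) is needed.
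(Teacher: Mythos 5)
Your proposal is correct and follows essentially the same route as the paper: both reduce membership in $\PO(G)$ to character equality on prime-power-order elements via Proposition \ref{proposition:1}, apply the induced character formula together with the observation that conjugation preserves order, and then check vanishing of the $G$-fixed points (the paper asserts $\dim(\Ind_H^G(U))^G=\dim U^H$ directly from the construction of the induced module, where you derive it by Frobenius reciprocity -- the same fact with a slightly more explicit justification). No gaps.
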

\begin{proof}
Assume $U$ and $V$ satisfy the assumptions of the lemma. Note first that $U^H\cong(\Ind_H^{G}(V))^G$ and $V^H\cong(\Ind_H^{G}(V))^G$ by the construction of the induced module. This, in connection with $\dim U^H=\dim V^H=0$, gives us $\dim(\Ind_H^{G}(U))^G=\dim(\Ind_H^{G}(V))^G=0$. Thus, it suffices to show that $\Ind_H^{G}(U)-\Ind_H^{G}(V)\in PO(G)$, which is equivalent, by Proposition \ref{proposition:1}, to proving that $\Ind_H^{G}(\chi_{\raisebox{-.5ex}{$\scriptstyle U$}})(p)=\Ind_H^{G}(\chi_{\raisebox{-.5ex}{$\scriptstyle V$}})(p)$ for any $p\in G$ of prime power order. 

Take $p\in G$ of prime power order. If the intersection of the conjugacy class $(p)$ of element $p$ in $G$ with the subgroup $H$ is empty, then, by Theorem \ref{theorem:inducedCharacterFormula}, $\Ind_H^{G}(\chi_{\raisebox{-.5ex}{$\scriptstyle U$}})(p)=\Ind_H^{G}(\chi_{\raisebox{-.5ex}{$\scriptstyle V$}})(p)=0$. Assume $H\cap(p)$ is nonempty and let $h_1,\ldots,h_m$ be the representatives of all the distinct conjugacy classes of elements in $H$ of the set $H\cap(p)$. Then, by Theorem \ref{theorem:inducedCharacterFormula}, 
$$
\Ind_H^{G}(\chi_{\raisebox{-.5ex}{$\scriptstyle U$}})(p)=\Ind_H^{G}(\chi_{\raisebox{-.5ex}{$\scriptstyle V$}})(p)\Leftrightarrow
$$
\begin{eqnarray*}\frac{\chi_{\raisebox{-.5ex}{$\scriptstyle U$}}(h_1)}{|C_H(h_1)|}+\ldots+\frac{\chi_{\raisebox{-.5ex}{$\scriptstyle U$}}(h_m)}{|C_H(h_m)|}=\frac{\chi_{\raisebox{-.5ex}{$\scriptstyle V$}}(h_1)}{|C_H(h_1)|}+\ldots+\frac{\chi_{\raisebox{-.5ex}{$\scriptstyle V$}}(h_m)}{|C_H(h_m)|}
\end{eqnarray*}
The above equality is true since $h_i$ is of prime power order and thus, by our assumption, $\chi_{\raisebox{-.5ex}{$\scriptstyle U$}}(h_i)=\chi_{\raisebox{-.5ex}{$\scriptstyle V$}}(h_i)$ for $i=1,\ldots,m$.
\end{proof}
We can prove now Theorem \ref{theorem:B} announced in the Introduction.
\begin{proof}[Proof of Theorem \ref{theorem:B}]
Assume $U$ and $V$ are two non-isomorphic Smith matched $\mathbb{R}N$-modules such that $W_U=\Ind_N^{G}(U)$ and $W_V=\Ind_N^{G}(V)$ are $\mathcal{P}$-oriented. Since $U-V\in\widetilde{PO}(N)$, it follows by Lemma \ref{lemma:3} that $W_U-W_V\in\widetilde{PO}(G)$. By Theorem \ref{theorem1}, the conditions $(2)-(4)$ are satisfied for $W_U$ and $W_V$. We show that $W_U^L=W_V^L=\{0\}$ for any large subgroup of $L\in \mathcal{L}(G)$. We have, by Lemma \ref{lemma:2}, $L_N=L\cap N\in \mathcal{L}(N)$. Thus $U^{L_N}=\{0\}$ and $V^{L_N}=\{0\}$ by our assumption. This, in connection with Lemma \ref{dimensionFormula}, gives $W_U^L=W_V^L=\{0\}$. Obviously, $\dim(\Ind_N^G(U))=\dim(\Ind_N^G(V))\geq 6$. This completes the proof that $W_U$ and $W_V$ are Smith matched. Hence, by the considerations at the beginning of this section, there exists a two fixed point action on some sphere $S$ with tangent spaces at the fixed points isomorphic to $W_U$ and $W_V$. Moreover, since $U\not\cong V$ as $\mathbb{R}N$-modules, it follows that $W_U\not\cong W_V$ as $\mathbb{R}G$-modules for $\Ind_N^G:\RO(N)\rightarrow\RO(G)$ is a monomorphism.
\end{proof}
\begin{remark}\label{remark:porientability}
\emph{If we drop the assumption of $\mathcal{P}$-orientability of $W_U$ and $W_V$, we can formulate a similar assertion. By \cite[Key Lemma, pp. 887]{Solomon2002}, the modules $W_U'=W_U\oplus W_U$ and $W_V'=W_V\oplus W_U$ are $\mathcal{P}$-oriented. Obviously, these modules are Smith matched. Thus, there exists a two fixed point action of $G$ on some sphere $S$ with tangent spaces at the two fixed points isomorphic to $\Ind_N^G(U\oplus U)$ and $\Ind_N^G(U\oplus V)$. Obviously, these modules are not isomorphic.} 
\end{remark}
\begin{remark}\label{remark:directProducts}
\emph{In case $N$ is a direct summand of $G$ with $G=N\times H$, then, once we have two non-isomorphic Smith matched $\mathbb{R}N$-modules, then there exists a Smith exotic action of $G$ on $S^{2n|H|}$. This is a consequence of the fact that the condition $(1)$ (and thus $(2)$) from Corollary \ref{corollary:inducedNormal} concerning concjugacy classes is naturally satisfied if $N$ is a direct summand.}

\end{remark}
\subsection{Application to direct products with $\SL(2,5)$ as a direct summand}
For $N=\SL(2,5)$, we find the Smith matched and $\mathcal{P}$-oriented $\mathbb{R}N$-modules $U$ and $V$ with the minimal dimension. 

The following table contains nontrivial characters of real irreducible representations of $N$, see \cite[pp. 10-11]{Mizerka2019_2}.
\begin{center}
\begin{tabular}{l|l*{7}{c}r}
            &  $\mathbf{1}$& $z$ & $c$ & $d$ & $zc$&$zd$&$a$  & $b$ &$b^2$\\
\hline
$V_{3,1}$     & $3$ & $3$ & $\frac{1+\sqrt{5}}{2}$ & $\frac{1-\sqrt{5}}{2}$ &$\frac{1+\sqrt{5}}{2}$ & $\frac{1-\sqrt{5}}{2}$ & $-1$ & $0$&$0$ \\
$V_{3,2}$     & $3$ & $3$ & $\frac{1-\sqrt{5}}{2}$ & $\frac{1+\sqrt{5}}{2}$ & $\frac{1-\sqrt{5}}{2}$ & $\frac{1+\sqrt{5}}{2}$ &$-1$ & $0$&$0$ \\
$V_{4,1}$    & $4$ & $4$ & $-1$ & $-1$ &$1$ & $1$ &$ 0$ & $-1$ &$-1$ \\
$V_{4,2}$    &$ 4 $&$ -4 $&$ -1+\sqrt{5} $&$ -1-\sqrt{5} $& $ 1-\sqrt{5} $&$ 1+\sqrt{5} $&$0$ & $2$&$-2$ \\
$V_{4,3}$    &$ 4 $&$ -4 $&$ -1-\sqrt{5} $&$ -1+\sqrt{5} $& $ 1+\sqrt{5} $&$ 1-\sqrt{5}$&$0$ & $2$&$-2$ \\
$V_5$   & $5$ & $5$ & $0$ & $0$ & $0$&$0$&$1$ & $-1 $ &$-1$\\
$V_8$    & $8$ & $-8$ & $-2$ & $-2$ &$2$ & $2$ &$ 0$ & $-2$&$2$ \\
$V_{12}$   & $12$ & $-12$ & $2$ & $2$ &$-2$ & $-2$ & $0$ &$ 0$  &$0$
\end{tabular}
\end{center}
The subgroup lattice of $N$ (up to conjugacy) is as follows.
\begin{center}
\begin{tikzpicture}[scale=1.3,sgplattice]
  \node at (2,0) (1) {\gn{C1}{C_1}};
  \node at (2,0.803) (2) {\gn{C2}{C_2}};
  \node at (3.88,1.76) (3) {\gn{C3}{C_3}};
  \node at (0.125,1.76) (4) {\gn{C5}{C_5}};
  \node at (2,1.76) (5) {\gn{C4}{C_4}};
  \node at (3.88,2.97) (6) {\gn{C6}{C_6}};
  \node at (0.125,2.97) (7) {\gn{C10}{C_{10}}};
  \node at (2,2.97) (8) {\gn{Q8}{Q_8}};
  \node at (3.88,4.19) (9) {\gn{Q12}{Q_{12}}};
  \node at (0.125,4.19) (10) {\gn{Q20}{Q_{20}}};
  \node at (2,4.19) (11) {\gn{SL(2,3)}{\SL(2,3)}};
  \node at (2,5.14) (12) {\gn{SL(2,5)}{\SL(2,5)}};
  \draw[lin] (1)--(2) (1)--(3) (1)--(4) (2)--(5) (2)--(6) (3)--(6) (2)--(7)
     (4)--(7) (5)--(8) (5)--(9) (6)--(9) (5)--(10) (7)--(10) (6)--(11)
     (8)--(11) (9)--(12) (10)--(12) (11)--(12);
  
\end{tikzpicture}
\end{center}
where $Q_{4k}=\langle a^{2k}=1,b^2=a^k,bab^{-1}=a^{-1}\rangle$. By \cite[pp.19-20]{Mizerka2019_2} and the computations for non-cyclic subgroups of $N$ (see author's implementations \cite{Mizerka2019}, using the GAP software \cite{GAP4}), the fixed point dimension table for real irreducible representations of $G$ is as follows (dimensions for $V_{4,2}$ and $V_{4,3}$, as well as $V_{3,1}$ and $V_{3,2}$ agree).
\begin{center}
\small\addtolength{\tabcolsep}{0pt}
\begin{tabular}{l|c*{12}{c}}
            &  $C_{1}$& $C_{2}$ & $C_{3}$ & $C_{4}$ & $C_{5}$  & $C_{6}$ & $C_{10}$&$Q_{8}$ & $Q_{12}$&$Q_{20}$&$\SL(2,3)$&$\SL(2,5)$  \\
\hline
$V_{3,1}$ & $3$ & $3$ & $1$ & $1$ &  $1$& $1$ &$1$ & $0$& $0$& $0$&$0$ & $0$\\
$V_{4,1}$   & $4$ & $0$ &  $0$&  $0$& $0$ &  $0$&$0$ & $0$& $0$& $0$& $0$&$0$ \\
$V_{4,2}$     &  $4$& $4$ & $2$ & $2$ & $0$ & $2$ & $0$& $1$&$1$ & $0$& $1$&$0$ \\
$V_5$     & $5$ & $5$ & $1$ & $3$ & $1$ & $1$ & $1$& $2$&$1$ &$1$ &$0$ &$0$ \\
$V_8$     &  $8$& $0$ & $4$ & $0$ & $0$ & $0$ &$0$ & $0$& $0$& $0$& $0$& $0$\\
$V_{12}$     & $12$ & $0$ & $4$ & $0$ & $4$ & $0$ &$0$ & $0$& $0$& $0$&$0$ &$0$ \\
\end{tabular}
\end{center}

We checked with GAP (see \cite{Mizerka2019}) that $\mathbb{R}N$-modules $U=2V_{3,1}\oplus V_{4,2}\oplus 2V_{4,3}\oplus 2V_5\oplus V_8\oplus V_{12}$ and $V=2V_{3,2}\oplus V_{4,1}\oplus 2V_{4,3}\oplus 2V_5\oplus V_8\oplus V_{12}$ are Smith matched and $\mathcal{P}$-oriented. Hence, there exists a Smith exotic action of $G$ on $S^{48}$. This yields, by Remark \ref{remark:directProducts}, the existence of Smith exotic actions on $S^{96|H|}$ of $N\times H$, where $H$ is any group.
\section{Examples for which Conjecture \ref{conjecture:C} holds}
Assume $G$ is an Oliver group. Before, we proceed, let us recall the \emph{Laitinen conjecture}. 
\begin{conjecture}\cite[Appendix]{LaitinenPawalowski1999}\label{conjecture:Laitinen}
If $\prim(G)\geq 2$, then $\Sm(G)\neq 0$.
\end{conjecture}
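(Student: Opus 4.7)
The plan is to imitate the \emph{Smith matched plus $\mathcal{P}$-oriented} construction recalled in Section \ref{section:conditionsSufficiente}, but applied directly to $G$ rather than via induction from a subgroup. That is, I aim to produce two non-isomorphic $\mathbb{R}G$-modules $U$ and $V$ satisfying all six conditions of Definition \ref{definition:smithMatched} together with $\mathcal{P}$-orientability; the two fixed-point disk of Oliver, followed by doubling and Morimoto's Deleting-Inserting Theorem, then yields a Smith exotic sphere action and hence $U - V \in \Sm(G) \setminus \{0\}$. The starting input is the hypothesis $\prim(G) \geq 2$, which gives $\rank(\widetilde{\PO}(G)) = \prim(G) - 1 \geq 1$ by \cite[Lemma 2.1]{LaitinenPawalowski1999}. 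One can therefore pick a nonzero element $U_0 - V_0 \in \widetilde{\PO}(G)$; by Proposition \ref{proposition:1}, the characters of $U_0$ and $V_0$ then agree on every element of prime power order but differ on some element whose order is divisible by at least two primes, while $U_0^G = V_0^G = \{0\}$ and $U_0 \not\cong V_0$.

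Next, I would fatten $U_0$ and $V_0$ by a common summand $W$ so that $U := U_0 \oplus W$ and $V := V_0 \oplus W$ meet the remaining conditions $(2)$--$(6)$ of Definition \ref{definition:smithMatched}: the weak gap condition, the dimension lower bounds $\dim W^P \geq 5$ and $\dim W^H \geq 2$, realization of every pseudocyclic subgroup as an isotropy group, vanishing on every large subgroup, and total dimension at least $6$. Adding a common summand preserves $U - V = U_0 - V_0 \in \widetilde{\PO}(G)$ and keeps $U \not\cong V$. If the resulting pair is not yet $\mathcal{P}$-oriented, I would apply the doubling trick of Remark \ref{remark:porientability} based on \cite[Key Lemma]{Solomon2002}, replacing $(U,V)$ by $(U \oplus U,\, V \oplus U)$, which is still Smith matched, still non-isomorphic, and now $\mathcal{P}$-oriented. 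The machinery after Definition \ref{definition:smithMatched} then delivers the required sphere action.

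The main obstacle is the fattening step. Enforcing simultaneously the vanishing $U^L = V^L = \{0\}$ for every large $L \in \mathcal{L}(G)$, the weak gap inequalities $\dim W^P \geq 2\dim W^H$ for all $P < H \leq G$, and the pseudocyclic isotropy realization -- without destroying $U - V \in \widetilde{\PO}(G)$ -- is a delicate combinatorial problem on the multiset of irreducible $\mathbb{R}G$-summands of the regular representation. For general Oliver $G$ there is no guarantee that a suitable $W$ exists, and in fact this is the reason the Laitinen conjecture is still open in full generality and has only been settled for particular families (perfect groups, nilpotent groups, groups with Oliver-type gap modules, etc.). Realistically, then, I expect the upcoming section to establish the conjecture for explicit classes of groups by bootstrapping from a normal subgroup $N$ for which the conjecture is already known, using the induction monomorphism criterion of Theorem \ref{theorem:A} together with Theorem \ref{theorem:B}, rather than to attack Conjecture \ref{conjecture:Laitinen} in full generality.
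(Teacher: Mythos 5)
This statement is a \emph{conjecture}: the paper offers no proof of it, and indeed none can exist as stated, because the paper itself records that $G=\Aut(A_6)$ is a counterexample (it has $\prim(G)\geq 2$ yet, by Morimoto's work cited via \cite[Answer. 3.9.1]{Pawalowski2018}, $\Sm(\Aut(A_6))=0$). The paper only ever invokes the Laitinen conjecture in the three families (a), (b), (c) where it is known to hold, quoting \cite{Pawalowski2018}; it never attempts a general argument. So there is nothing in the paper for your proposal to be compared against, and any completed version of your proposal would prove a false statement.

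That said, your own assessment is essentially accurate, and the place where your sketch breaks is exactly where it must. Choosing a nonzero $U_0-V_0\in\widetilde{\PO}(G)$ from $\rank(\widetilde{\PO}(G))=\prim(G)-1\geq 1$ is fine, and the doubling trick of Remark \ref{remark:porientability} does dispose of $\mathcal{P}$-orientability. But the ``fattening'' step is not merely delicate, it is obstructed: adding a common summand $W$ cannot create the vanishing condition $U^L=V^L=\{0\}$ on large subgroups (condition (5)) if $U_0^L\neq\{0\}$ or $V_0^L\neq\{0\}$ -- direct sums only increase fixed-point dimensions -- so one needs a nonzero element of $\widetilde{\PO}(G)^{\mathcal{L}}$ to begin with, and this subgroup can vanish even when $\widetilde{\PO}(G)$ does not (this is precisely the mechanism behind the paper's later rank estimate $\rank(\widetilde{\PO}(G)^{\mathcal{L}})\geq\prim(G)-\overline{\prim}(G,G^{nil})$ and behind Lemma \ref{lemma:applicationGap}, which additionally needs a gap hypothesis). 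Similarly the weak gap condition and the pseudocyclic isotropy realization require $G$ (or a suitable normal subgroup) to be a gap group, which is a genuine restriction. Your closing paragraph correctly predicts the paper's actual strategy -- bootstrapping from a normal subgroup $N$ via the monomorphism criterion of Theorem \ref{theorem:A} and Theorem \ref{theorem:B}, or citing the settled cases of the conjecture -- so your conclusion is right; just be aware that the honest statement is stronger than ``still open'': the conjecture is false in general, with $\Aut(A_6)$ the known exception.
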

Following \cite[Answer. 3.9.1]{Pawalowski2018}, the Laitinen conjecture holds in either of the following situations.
\begin{enumerate}[(a)]
    \item $G$ is of odd order,
    \item $G$ has a cyclic quotient of composite odd order (in particular, $G$ is nilpotent, see \cite[p. 25]{Pawalowski2018}),
    \item $G$ is nonsolvable and $G\neq \Aut(A_6)$.
\end{enumerate} 
Thus, in either of the three cases above, the triviality of Smith sets is easy to verify by means of the Laitinen conjecture. What is interesting is to provide new examples of Oliver groups which do not lie in either of the three classes described above. We apply Lemma \ref{dimensionFormula} to give a new example of $G$ with nontrivial Smith set such that $G$ does not belong to $(a)$, $(b)$ or $(c)$. Note that if $G$ belongs to either of the classes $(a)$, $(b)$ or $(c)$ and $N\trianglelefteq G$ such that $\prim(N)\geq 2$ , then Conjecture \ref{conjecture:C} holds. Indeed, since $\prim(N)\geq 2$, then if $\Ind_N^G:\RO(N)\rightarrow\RO(G)$ is a monomorphism, we have $\prim(G)\geq 2$. Thus, $\Sm(G)\neq 0$ by the Laitinen conjecture applied for $G$.

Let us call $G$ a \emph{gap group} if $\mathcal{P}(G)\cap\mathcal{L}(G)=\emptyset$ and there exists an $\mathbb{R}G$-module $V$ such that $\dim V^L=0$ for every $L\in\mathcal{L}(G)$ and $V$ satisfies the strong gap condition. Following \cite{PawalowskiSumi2009}, we introduce a symbol $\widetilde{PO}(G)^{\mathcal{L}}$ for a subgroup of $\widetilde{PO}(G)$ consisting of differences $U-V$ such that $\dim U^L=\dim V^L=0$ for any $L\in\mathcal{L}(G)$.
\begin{lemma}\label{lemma:applicationGap}
Assume $\widetilde{\PO}(G)^{\mathcal{L}}\neq 0$ and $N$ is a normal gap subgroup of $G$ such that $\Ind_N^G:\RO(N)\rightarrow\RO(G)$ is a monomorphism. Then $\Sm(G)\neq 0$.
\end{lemma}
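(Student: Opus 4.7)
The plan is to manufacture non-isomorphic Smith matched $\mathbb{R}G$-modules out of the given nontrivial primary class together with the gap-module structure supplied by $N$, and then to invoke the construction recalled at the start of Section \ref{section:conditionsSufficiente} (combined with Remark \ref{remark:porientability} if $\mathcal{P}$-orientability fails).

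First I would pick a nonzero $U-V\in\widetilde{\PO}(G)^{\mathcal{L}}$; by definition this yields $U\not\cong V$ together with $U^L=V^L=\{0\}$ for every $L\in\mathcal{L}(G)$. Next I would exploit that $N$ is a gap group to fix an $\mathbb{R}N$-module $W_0$ satisfying the strong gap condition with $W_0^L=\{0\}$ for every $L\in\mathcal{L}(N)$, and set $W:=\Ind_N^G(W_0)$. Combining Lemma \ref{lemma:2} with Lemma \ref{dimensionFormula} yields $W^L=\{0\}$ for every $L\in\mathcal{L}(G)$, while Theorem \ref{theorem1} transports conditions $(2)$--$(4)$ of Definition \ref{definition:smithMatched} from $W_0$ over to $W$.

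Then I would stabilize. Set $U':=U\oplus kW\oplus W''$ and $V':=V\oplus kW\oplus W''$, where $k$ is a sufficiently large integer and $W''$ is an auxiliary $\mathbb{R}G$-module, again obtained by induction from suitable $\mathbb{R}N$-modules, chosen so as to realize every pseudocyclic subgroup of $G$ as an isotropy and to enforce the lower-bound dimension requirements in Definition \ref{definition:smithMatched}. Adding equal summands to both sides preserves $U'-V'=U-V\ne 0$ and keeps the difference inside $\widetilde{\PO}(G)^{\mathcal{L}}$; meanwhile the gap contribution from $W$ and the isotropy contribution from $W''$ secure the remaining conditions. Therefore $U'$ and $V'$ are non-isomorphic Smith matched $\mathbb{R}G$-modules, and the discussion opening Section \ref{section:conditionsSufficiente} produces a two fixed point $G$-action on a standard sphere with non-isomorphic tangential representations, so $\Sm(G)\ne 0$.

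The main obstacle is the stabilization step: the auxiliary summand $W''$ must simultaneously have $(W'')^L=\{0\}$ for every $L\in\mathcal{L}(G)$ (so that $U'-V'$ remains in $\widetilde{\PO}(G)^{\mathcal{L}}$) while realizing every pseudocyclic isotropy and providing the dimension lower bounds. This is precisely where the gap assumption on $N$ does the heavy lifting: it furnishes enough $\mathbb{R}N$-modules vanishing on $\mathcal{L}(N)$ whose inductions vanish on $\mathcal{L}(G)$ by Lemma \ref{lemma:2} and Lemma \ref{dimensionFormula}, and whose pseudocyclic isotropies transport to $G$ through the induced-character formula used in the proof of Theorem \ref{theorem1}.
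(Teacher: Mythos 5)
Your proposal takes a genuinely different route from the paper, and it has a real gap at the step you yourself flag as ``the main obstacle.'' The paper does not try to build Smith matched modules by hand: it only uses the hypothesis on $N$ to show that \emph{$G$ is a gap group} --- the induced module $\Ind_N^G(W_0)$ of the gap module of $N$ satisfies the strong gap condition and vanishes on $\mathcal{L}(G)$ (via Lemma \ref{dimensionFormula}, Theorem \ref{theorem1} and Lemma \ref{lemma:2}), and $\mathcal{P}(G)\cap\mathcal{L}(G)=\emptyset$ because $H\in\mathcal{P}(G)\cap\mathcal{L}(G)$ would force $H\cap N\in\mathcal{P}(N)\cap\mathcal{L}(N)$ by Lemma \ref{lemma:2} --- and then invokes \cite[Theorem 4.1]{PawalowskiSumi2009}, which states that $\widetilde{\PO}(G)^{\mathcal{L}}\subseteq\Sm(G)$ for gap Oliver groups. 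The first half of your argument (inducing $W_0$ and transporting the gap and $\mathcal{L}$-vanishing conditions) coincides with the first half of the paper's proof; after that you diverge and, in effect, attempt to reprove the Pawa{\l}owski--Sumi theorem from scratch via Definition \ref{definition:smithMatched} and the construction at the start of Section \ref{section:conditionsSufficiente}.

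The gap is the auxiliary summand $W''$. You need $W''$ to simultaneously (i) realize every pseudocyclic subgroup of $G$ as an isotropy subgroup and supply $\dim\geq 5$ and $\dim\geq 2$ fixed sets on prime power and pseudocyclic subgroups (conditions $(3)$--$(4)$), and (ii) satisfy $(W'')^L=\{0\}$ for every $L\in\mathcal{L}(G)$ (condition $(5)$). These two requirements pull in opposite directions: the standard modules that realize all isotropy subgroups, such as multiples of $\mathbb{R}[G]-\mathbb{R}$ or permutation modules $\mathbb{R}[G/H]$, have nonzero fixed sets on proper large subgroups, so they violate (ii). Your claim that ``the gap assumption on $N$ does the heavy lifting'' does not hold here: the gap hypothesis controls fixed point \emph{dimensions} on prime power subgroups, not isotropy realization for pseudocyclic subgroups of $G$, and induced modules are too coarse for this purpose --- by Lemma \ref{dimensionFormula} the fixed set dimension $\dim(\Ind_N^G V)^K$ depends only on $K\cap N$ and $|K|$, so an induced module cannot be tuned to distinguish a pseudocyclic $H\leq G$ from other subgroups meeting $N$ in the same subgroup, and there is no argument that such an $H$ actually occurs as an isotropy subgroup. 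Constructing an $\mathcal{L}$-free module with the required isotropy structure is precisely the nontrivial content of Oliver's and Pawa{\l}owski--Sumi's work that the paper cites as a black box; asserting its existence is not a proof. (The $\mathcal{P}$-orientability issue you defer to Remark \ref{remark:porientability} is fine, but it is the only one of the remaining conditions that is.)
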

\begin{proof}
Since $N$ is a gap group, there exists an $\mathbb{R}G$-module $V$ satisfying the gap condition and such that $\dim V^L=0$ for any $L\in \mathcal{L}(N)$. As in the proofs of Theorem \ref{theorem1} and Theorem \ref{theorem:B} that $\Ind_N^G(V)$ satisfies the strong gap condition and $\dim(\Ind_N^G(V))^L=0$ for any $L\in\mathcal{L}(G)$. Moreover, $\mathcal{P}(G)\cap\mathcal{L}(G)=\emptyset$ since $\mathcal{P}(N)\cap\mathcal{L}(N)=\emptyset$. Indeed if $H\in\mathcal{P}(G)\cap\mathcal{L}(G)$, then it follows by Lemma \ref{lemma:2} that $H\cap N\in\mathcal{L}(N)$. On the other hand $H\cap N\in\mathcal{P}(N)$. A contradiction. This proves that $G$ is a gap group and we cocnlude from \cite[Theoem 4.1]{PawalowskiSumi2009} that $\widetilde{\PO}(G)^{\mathcal{L}}\in\Sm(G)$. Since $\widetilde{\PO}(G)^{\mathcal{L}}\neq 0$ by our assumption, the assertion follows.
\end{proof}
Following \cite{PawalowskiSumi2009}, let us denote for $H\trianglelefteq G$ by $\overline{\prim}(G,H)$ the number of real conjugacy classes in $G/H$ of elements of the form $gH$ containing elements of $G$ not of prime power order. (note that $\prim(G)=r_G$ and $\overline{\prim}(G,H)=\overline{r}_{G/H}$ in \cite{PawalowskiSumi2009}). Let $G^{nil}$ denote the smallest normal subgroup of $G$ such that $G/G^{nil}$ is nilpotent. The rank of $\widetilde{\PO}(G)^{\mathcal{L}}\neq 0$ can be then estimated from below by $\prim(G)-\overline{\prim}(G,G^{nil})$, see \cite[Lemma 4.3]{PawalowskiSumi2009}.

Take $G=C_6\times A_4\times D_{30}\cong C_{30}\rtimes(S_3\times A_4)$, the direct product of cyclic group of order $6$, alternating group of degree $4$ and dihedral group of order $30$. $G$ is an example of an Oliver group which does not belong to either of the classes $(a)$, $(b)$ or $(c)$ (the fact that $G$ is Oliver follows since $S_3\times A_4$ is Oliver, see \cite{PawalowskiSumi2009}). Assume $a$ is the generator of $C_6\leq G$ and $D_{30}\leq G$ has the standard presentation $\langle x,y|x^{15}=1,y^2=1,yxy=x^{-1}\rangle$. Put $N=\langle a\rangle\times\{1\}\times\langle x\rangle\cong C_{15}$, the cyclic group of order $30$. Obviously $N$ is normal in $G$. Take any $n=(a^k,1,x^l)\in N$, where $k=0,...,5$ and $l=0,...,14$. Notice that $(n)^{\pm}_N=(n)^{\pm}_G$. Indeed, for any $n'\in(n)_G^{\pm}$ we can find $m=0,...,14$ such that 
$$n'=(a^{\pm k},1,(yx^m)x^{\pm l}(yx^m)^{-1})=(a^{\pm k},1,yx^{\pm l}y^{-1})=(1,1,y)n^{\pm}(1,1,y)^{-1}$$
Thus $n'\in(n)^{\pm}_G$ and $(n)^{\pm}_N=(n)^{\pm}_G$. It follows then by Corollary \ref{corollary:inducedNormal} that $\Ind_N^G:\RO(N)\rightarrow\RO(G)$ is a monomorphism. Moreover, $N=C_{30}$ is a gap group (in fact, the gap group with the minimal order, see \cite[p. 330]{Morimoto2000}). We show that $\widetilde{\PO}(G)^\mathcal{L}\neq 0$ which would imply that $\Sm(G)\neq 0$ by Lemma \ref{lemma:applicationGap}. We have $G^{nil}=\{1\}\times H\times \langle x\rangle$, where $H$ is a subgroup of $A_4$ isomorphic to $C_2^2$. Hence, $G/G^{nil}\cong C_6^2$. Using GAP \cite{GAP4} we compute the primary number of $G$, $\prim(G)=107$ and $\beta'(G/G^{nil})=\beta'(C_6^2)=20$. Thus
$$
\rank(\widetilde{\PO}(G)^{\mathcal{L}})\geq \prim(G)-\beta'(G/G^{nil})=87>0
$$
In particular, the assertion of Conjecture \ref{conjecture:C} for $G$ and $N$ holds. 

Let us show some examples of solvable Oliver groups which are not nilpotent and confirm Conjecture \ref{conjecture:C}. These groups are 
\begin{itemize}
    \item $G_1=C_3\times S_4=\SmallGroup(72,42)$, 
    \item $G_2=S_3\times A_4=\SmallGroup(72,44)$, 
    \item $G_3=(C_2^2\rtimes C_3)^2\rtimes C_2=\SmallGroup(288,1025)$,
    \item $G_4=A_4^2\rtimes C_2^2=\SmallGroup(576,8654)$.
    \end{itemize} Denote by $\mathcal{N}(G)$ the set of normal subgroups $N$ of $G$ (up to isomorphism) such that $N\not\in\{1,G\}$ and $\Ind_N^G:\RO(N)\rightarrow\RO(G)$ is a monomorphism. We verify in GAP that $\mathcal{N}(G_1)=\{C_3,A_4,S_4\}$, $\mathcal{N}(G_2)=\{C_3,S_3,A_4\}$, $\mathcal{N}(G_3)=\emptyset$,  $\mathcal{N}(G_4)=\{G_3\}$. By \cite[Answer 4.1.3 (1)]{Pawalowski2018} we have $\Sm(C_3)=0$, while $\Sm(S_3)=\Sm(S_4)=\Sm(A_4)=0$ by \cite[Corollary 4.5.4 (3)]{Pawalowski2018} for $\prim(S_3)=\prim(S_4)=\prim(A_4)=0$. The triviality of $\Sm(G_3)$ was proved in \cite[Proposition 5.4]{PawalowskiSumi2009}. Thus, the Smith set of every group from $\mathcal{N}(G_i)$, $i=1,...,6$ is trivial, see \cite{Pawalowski2018}. This, in particular, confirms Conjecture \ref{conjecture:C} for any $N\trianglelefteq G_i$ with $N\in\mathcal{N}(G_i)$ for $i=1,...,4$.

Similar result holds for the unique nonsolvable group $G$ for which the Laitinen conjecture does not hold, that is for $G=\Aut(A_6)$. We have $\mathcal{N}(G)=\{\SmallGroup(720,765)\}$. Put $N=\SmallGroup(720,765)$. Using GAP, we verify that $N$ is a nonsolvable group with $\prim(N)=0$. Thus, by the Laitinen conjecture, we have $\Sm(N)=0$ and the assertion of Conjecture \ref{conjecture:C} holds in this case.
 \section*{Acknowledgements}
 The author would like to thank Prof. Krzysztof Pawałowski for his comments and interest in the results presented here. I am also grateful to Dr. Marek Kaluba for remarks which substantially improved the presentation of this paper. Also, I would like to thank all the participants of the algebraic topology seminar held at Adam Mickiewicz University for helpful comments.


\newpage
\bibliographystyle{acm}
\bibliography{sample.bib}
\textcolor{white}{fsf}\\
\emph{Faculty of Mathematics and Computer Science}\\
\emph{Adam Mickiewicz University in Poznań}\\
\emph{ul Uniwersytetu Poznańskiego 4}\\
\emph{61-614 Poznań, Poland}\\
\emph{Email address:} piotr.mizerka@amu.edu.pl







\end{document}